\theoremstyle{plain}
\newtheorem{teo}{Theorem}[section]
\newtheorem{lem}[teo]{Lemma}
\newtheorem{prop}[teo]{Proposition}
\newtheorem{cor}[teo]{Corollary}
\theoremstyle{definition}
\newtheorem{defi}[teo]{Definition}
\newtheorem{ex}[teo]{Example}
\newtheorem{rem}[teo]{Remark}
\numberwithin{equation}{teo}
\newcommand{\Cbb}{{\mathbb C}}
\newcommand{\Qbb}{{\mathbb Q}}
\newcommand{\Zbb}{{\mathbb Z}}
\begin{document}
\title{A survey on the singularities of spherical varieties}
\author{Boris Pasquier}
\maketitle
\abstract{We list combinatorial criteria of some singularities, which appear in the Minimal Model Program or in the study of (singular) Fano varieties, for spherical varieties. Most of the results of this paper are already known or are quite easy corollary of known results. We collect these results, we precise some proofs and add few results to get a coherent and complete survey.}
\section{Introduction}

Spherical varieties form a big family of rational and normal complex varieties, including toric varieties and flag varieties. Here, we list some types of singularities of these varieties and we give combinatorial criteria of these singularities. Most of the results of this paper are already known or are quite easy corollary of known results. \\

We will only consider types of singularities that appear in birational geometry: in the Minimal Model Program or in the study of (singular) Fano varieties.\\

The paper is organized as follows.
In section~2, we describe the theory of spherical varieties and fix notations. In section~3, we deal with smooth, locally factorial and $\Qbb$-factorial spherical varieties. In section~4, we quickly give the criteria of Gorenstein and $\Qbb$-Gorenstein singularities for spherical varieties. In section~5, we characterize terminal, canonical and kawamata log terminal (klt) singularities for spherical varieties.
And in section~6, we conclude by an overview diagram and an example.

\section{Notations}
In all the paper, $G$ denotes a connected reductive algebraic group over $\Cbb$ and varieties are algebraic varieties over $\Cbb$.\\

A spherical variety $X$ is combinatorially associated to:
\begin{itemize}
\item a lattice $M$ of characters of a Borel subgroup $B$ of $G$ (and its dual $N$);
\item a cone $\mathcal{V}$ in $N_\Qbb:=N\otimes_\Zbb\Qbb$ (called the valuation cone);
\item the finite set $\mathcal{D}$ of $B$-stable, but not $G$-stable, irreducible divisors of $X$;
\item an injective map $\sigma$ from the set of $G$-invariant valuations of $\Cbb(X)$ to $\mathcal{V}\subset N_\Qbb$ (and a map, still called $\sigma$ from $\mathcal{D}$ to $N$);
\item and a colored fan $\mathbb{F}_X$ in $N_\Qbb$.
\end{itemize}

In this section, we describe all these objects and their connections to spherical varieties, and we recall the results we will need in the next sections. The main references of the theory summarized here, are \cite{LunaVust} and \cite{Knop89} for the classification of spherical varieties, \cite{Briondiv} for propoerties of divisors of spherical varieties, \cite{Brioncurves} and \cite{Luna} for the description of anticanonical divisors of spherical varieties.

\subsection{Spherical embeddings}

\begin{defi} 
A $G$-variety $X$ is an algebraic variety over $\Cbb$ equipped with an action of $G$ on $X$.

A spherical $G$-variety is a normal $G$-variety $X$ such that there exists $x\in X$ and a Borel subgroup $B$ of $G$ satisfying that the $B$-orbit of $x$ is open in $X$.
\end{defi}

Remark that, since the Borel subgroups of $G$ are all conjugated, we can fix a Borel subgroup $B$ of $G$ and give the following equivalent definition. 

\begin{defi}
A spherical $G$-variety is a normal $G$-variety with an open $B$-orbit.
\end{defi}

When there is no possible confusion about the group acting on varieties, we write spherical varieties instead of spherical $G$-varieties.

We can easily remark that spherical varieties have an open $G$-orbit, and have finitely many $B$-orbits and $G$-orbits. The open $G$-orbit of a spherical variety $X$ is isomorphic to an homogeneous space $G/H$ that is spherical as a $G$-variety. We say that $G/H$ is a spherical homogeneous space. Note that we can always choose $H$ such that $BH$ is open in $G$.

Among spherical varieties, we can distinguish several subfamilies of varieties: flag varieties (when $H$ is a parabolic subgroup of $G$ and then $X=G/H$), toric varieties (when $G=(\Cbb^*)^n$ and $H=\{1\}$), horospherical varieties (when $H$ contains a maximal unipotent subgroup of $G$) and symmetric varieties (when $H$ is an open subgroup of the fixed points set of an involution $\sigma$ in the automorphism group of $G$).\\

The classification, in terms of colored fans, is due to D.~Luna and T.~Vust (see also \cite{Knop89}). This classification is done by fixing a spherical homogeneous space $G/H$ (ie it classifies $G/H$-embedding in terms of colored fans).

\begin{defi}
Let $G/H$ be an homogeneous space. A $G/H$-embedding is a couple $(X,x)$, where $X$ is a normal $G$-variety and $x$ is a point of $X$ such that $G\cdot x$ is open in $X$ and $H$ is the stabilizer of $x$ in $G$.

Two $G/H$-embeddings $(X,x)$ and $(X',x')$ are isomorphic if there exists a $G$-equivariant isomorphism from $X$ to $X'$ that sends $x$ to $x'$.
\end{defi}

Note that, if $G/H$ is a spherical (resp. toric, horospherical, symmetric) homogeneous space and $(X,x)$ is a $G/H$-embedding, then $X$ is a spherical (resp. toric, horospherical, symmetric) variety. Inversely, if $X$ is a spherical (resp. toric, horospherical, symmetric) $G$-variety, let $x$ be a point in the open $G$-orbit of $X$ and let $H$ be the stabilizer in $G$ of $x$; then  $G/H$ is a spherical (resp. toric, horospherical, symmetric) homogeneous space and $(X,x)$ is a $G/H$-embedding.

By abuse, we often forget the point $x$ and say that $X$ is a $G/H$-embedding if $X$ is a normal variety with an open $G$-orbit isomorphic to $G/H$.

\subsection{The lattices $M$, $N$, and the colors of $G/H$}

The field $\Cbb(X)$ of rational functions on a spherical variety $X$ can be described by a lattice $M$ by noticing the following multiplicity free property.

\begin{rem}\label{rem:multfree}
Let $X$ be a spherical $G$-variety.
The field $\Cbb(X)$ of rational functions of $X$ is naturally a $G$-module. Let $\chi$ be a character of the Borel subgroup $B$ and $f_1$, $f_2$ be in $\Cbb(X)\backslash\{0\}$ such that, for any $b\in B$, $b\cdot f_1=\chi(b)f_1$ and $b\cdot f_2=\chi(b)f_2$. Then $\frac{f_1}{f_2}$ is a rational function fixed by $B$. Since $B$ acts with an open orbit on $X$, we deduce that $\frac{f_1}{f_2}$ is a constant.
\end{rem} 

\begin{defi}\label{def:combiMNcolors}
Let $G/H$ be a spherical homogeneous space. 
\begin{enumerate}
\item We denote by $M$ the lattice of weights $\chi$ of $B$ such that there exists $f_\chi\in\Cbb(G/H)\backslash\{0\}$ satisfying, for any $b\in B$, $b\cdot f_\chi=\chi(b)f$. Note that, by Remark~\ref{rem:multfree}, for any $\chi\in M$ the rational function $f_\chi$ is unique up to a scalar. We say that $f_\chi$ is a rational function of $X$ of weight $\chi$.
\item The dual ${\rm Hom}_\Zbb(M,\Zbb)$ of $M$ is denoted by $N$.
\item We denote by $M_\Qbb$ (resp. $N_\Qbb$) the $\Qbb$-vector space $M\otimes_\Zbb\Qbb$ (resp. $N\otimes_\Zbb\Qbb$).
\item The colors of the spherical homogeneous space $G/H$ are the $B$-stable divisors of $G/H$.
\item We denote by $\mathcal{D}$ the set of colors of $G/H$.
\end{enumerate}
\end{defi}

The rank of the lattice $M$ (which is also the rank of $N$) is called the rank of $G/H$ (or the rank of $X$ if $X$ is any $G/H$-embedding).

\subsection{The valuation cone and the maps $\sigma$}
\begin{defi}\label{def:valcone}
Let $G/H$ be a spherical homogeneous space. 
 \begin{enumerate}
\item The set of $G$-invariant valuations on $\Cbb(G/H)\backslash\{0\}$ (over $\Qbb$) is a cone $\mathcal{V}$ and there is an injective map $\sigma:\mathcal{V}\longrightarrow N_\Qbb$ defined by, for any $\nu\in\mathcal{V}$ and any $\chi\in M$, $\sigma(\nu)(\chi)=\nu(f_\chi)$ where $f_\chi$ is as in Definition~\ref{def:combiMNcolors}~(1). The image of $\mathcal{V}$ in $N_\Qbb$ is called the valuation cone of $G/H$. 
\item Any color $D$ of the spherical homogeneous space $G/H$ defines a $B$-invariant valuation on $\Cbb(G/H)\backslash\{0\}$, and then it defines (similarly to the definition of $\sigma$) a point in $N_\Qbb$ that we also denote by $\sigma(D)$. It is called the image of the color $D$ in $N_\Qbb$. (In fact, $\sigma(D)\in N$.)
\end{enumerate}
\end{defi}
\begin{rem}\label{rem:valcone}
\begin{itemize}[label=$\bullet$]
\item The valuation cone of $G/H$ equals $N_\Qbb$ if and only if $G/H$ is horospherical \cite[Corollaire~5.4]{BrionPauer}.
%

\item Since the colors of a spherical homogeneous space $G/H$ are not $G$-stable, it could happen that two colors of $G/H$ have the same image in $N_\Qbb$.
\item The opposite $-\mathcal{V}^\vee$ of the dual in $M_\Qbb$ of the valuation cone is simplicial, and generated by positive roots and sums of two strongly orthogonal positive roots  \cite{BrionValCone}.
\end{itemize}
\end{rem}

\begin{defi}
The primitive elements of the rays of the simplicial cone $-\mathcal{V}^\vee$ are called the spherical roots of $G/H$.
\end{defi}

\subsection{Colored fans}

\begin{defi}
Let $G/H$ be a spherical homogeneous space, $N_\Qbb$, $\mathcal{D}$, $\mathcal{V}$ and $\sigma$ defined as above.
\begin{enumerate}
\item A colored cone in $N_\Qbb$ is a couple $(\mathcal{C},\mathcal{F})$ such that
\begin{itemize}
\item $\mathcal{F}$ is a subset of $\mathcal{D}$;
\item $\mathcal{C}$ is a cone in $N_\Qbb$ generated by finitely many elements of $N\cap\mathcal{V}$ and the $\sigma(D)$ with $D$ in $\mathcal{F}$;
\item the relative interior of $\mathcal{C}$ intersects $\mathcal{V}$;
\item $\sigma(D)\neq 0$ for any $D\in\mathcal{F}$ and $\mathcal{C}$ contains no line.
\end{itemize}
\item A colored face of a colored cone $(\mathcal{C},\mathcal{F})$ is a couple $(\mathcal{C}',\mathcal{F}')$ where $\mathcal{C}'$ is a face of the cone $\mathcal{C}$, whose relative interior intersects $\mathcal{V}$, and $\mathcal{F}'=\{D\in\mathcal{F}\,\mid\,\sigma(D)\in\mathcal{C}'\}$. It is in particular a colored cone.
\item A colored fan in $N_\Qbb$ is a finite set $\mathbb{F}$ of colored cones such that: any colored face of a colored cone of $\mathbb{F}$ is in $\mathbb{F}$, and for any $u\in N_\Qbb\cap\mathcal{V}$ there exists at most one colored cone $(\mathcal{C},\mathcal{F})$ of $\mathbb{F}$ such that $u$ is in the relative interior of $\mathcal{C}$.
\item A fan is complete if $\bigcup_{(\mathcal{C},\mathcal{F})\in\mathbb{F}}\mathcal{C}\supset\mathcal{V}$.
\end{enumerate}
\end{defi}

Note that if $G/H=(\Cbb^*)^n$, then $\mathcal{D}$ is empty (i.e. there is no color), $\mathcal{V}=N_\Qbb$, and the definitions of colored cones and colored fans are equivalent to the definitions of cones and fans in toric geometry.\\

\subsection{The classification of $G/H$-embeddings}

To any $G/H$-embedding $(X,x)$, we associate a colored fan as follows.

For any $G$-orbit $Y$ of $X$, denote by $X_Y$ the $G$-stable subset $\{x\in X\,\mid\,\overline{G\cdot x}\supset Y\}$. Denote by $\mathcal{D}_{G,Y}$ the set of $G$-stable irreducible divisors in $X_Y$ and denote by $\mathcal{F}_{Y}$ the set of $\alpha\in\mathcal{S}_P$ such that the closure of $D_\alpha$ in $X_Y$ contains $Y$. For any $D\in\mathcal{D}_{G,Y}$, denote by $\sigma(D)$ the image by $\sigma$ of the $G$-invariant valuation on $\Cbb(G/H)\backslash\{0\}$ associated to $D$. Denote by $\mathcal{C}_Y$ the cone in $N_\Qbb$ generated by the $\sigma(D)$ with $D\in\mathcal{D}_{G,Y}$ and the $\sigma(D)$ with $D\in\mathcal{F}_{Y}$. 

We can now state the classification of $G/H$-embeddings when $G/H$ is spherical.

\begin{teo}[\cite{Knop89}] Let $G/H$ be a spherical homogeneous space.

Let $(X,x)$ be a $G/H$-embedding.
Then, for any $G$-orbit $Y$ of $X$, $(\mathcal{C}_Y,\mathcal{F}_{Y})$ is a colored cone in $N_\Qbb$ and the set of $(\mathcal{C}_Y,\mathcal{F}_{Y})$ with $Y$ in the set of $G$-orbits of $X$ is a colored fan in $N_\Qbb$. It is called the colored fan of $X$ and denoted by $\mathbb{F}_X$.

The map from the set of isomorphic classes of $G/H$-embeddings to the set of colored fans in $N_\Qbb$ that sends the class of $(X,x)$ to $\mathbb{F}_X$ is well-defined and bijective. 

Moreover, $X$ is complete if and only if $\mathbb{F}_X$ is complete.
\end{teo}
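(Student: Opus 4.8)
The plan is to follow the Luna--Vust strategy, reducing the global classification to the case of \emph{simple} embeddings (those with a single closed $G$-orbit) and then gluing. First I would record the local reduction: for a closed $G$-orbit $Y$ of $X$, the set $X_Y=\{x\in X\mid \overline{G\cdot x}\supset Y\}$ is $G$-stable and open, it is itself a $G/H$-embedding, and $Y$ is its unique closed orbit, so $X_Y$ is simple; moreover the $X_Y$ with $Y$ closed cover $X$. Since the data $(\mathcal{C}_Y,\mathcal{F}_Y)$ attached to $Y$ is computed entirely inside $X_Y$, the three assertions (that each $(\mathcal{C}_Y,\mathcal{F}_Y)$ is a colored cone, that together they form a colored fan, and that the assignment is bijective) split into a statement about simple embeddings and a statement about how simple embeddings glue.

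For a simple embedding $X$ with closed orbit $Y$, I would first verify the colored-cone axioms. Each $G$-stable prime divisor $D\in\mathcal{D}_{G,Y}$ defines a $G$-invariant valuation of $\Cbb(G/H)$, hence a point $\sigma(D)\in\mathcal{V}\cap N$; the relative interior of $\mathcal{C}_Y$ meets $\mathcal{V}$ (one checks that a $G$-invariant valuation with center $Y$ maps into it), and $\mathcal{C}_Y$ contains no line because the $B$-semiinvariant functions regular and nonvanishing on $X_Y$ have weights spanning a full-dimensional pointed cone. The crux is the bijection for simple embeddings. For injectivity I would reconstruct $X$ from $(\mathcal{C}_Y,\mathcal{F}_Y)$: the $B$-semiinvariant rational functions $f_\chi$ that are regular on $X$ are exactly those with $\chi$ nonnegative on $\mathcal{C}_Y$, i.e. $\chi\in\mathcal{C}_Y^\vee$, so the subalgebra of $\Cbb(G/H)$ they generate, together with its $G$-module structure, determines $X_Y$ up to $G$-isomorphism via the local structure theorem. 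For surjectivity I would run the same recipe in reverse, starting from an abstract colored cone, realizing the candidate algebra of sections, and checking that the resulting variety is normal, has the prescribed open orbit, and recovers the given cone.

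Next I would treat the gluing. Two simple open pieces $X_{Y_1}$ and $X_{Y_2}$ of a common embedding share the open orbit and are glued along the $G$-stable open subset $X_{Y_1}\cap X_{Y_2}$, whose combinatorial shadow is a common colored face of the two colored cones; this is precisely the face-compatibility built into the definition of a colored fan. The requirement that at most one colored cone contain a given $u\in N_\Qbb\cap\mathcal{V}$ in its relative interior corresponds to separatedness of the glued variety, proved by the valuative criterion, since a violation would force two distinct centers for the same $G$-invariant valuation. Conversely, any colored fan is realized by gluing the simple embeddings of its cones along the subvarieties attached to their common colored faces, and the fan axioms guarantee the result is a separated normal $G$-variety with open orbit $G/H$, whose colored fan is the given one.

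Finally, for completeness I would apply the valuative criterion of properness, using the key input that it suffices to test $G$-invariant valuations of $\Cbb(G/H)$, which are exactly the points of $\mathcal{V}$ via $\sigma$. Then $X$ is complete if and only if every such valuation has a center in $X$, which holds if and only if every point of $\mathcal{V}$ lies in some $\mathcal{C}_Y$, i.e. $\bigcup_Y\mathcal{C}_Y\supset\mathcal{V}$. I expect the main obstacle to be the existence half of the simple-embedding bijection, namely constructing a normal $G$-variety realizing a prescribed colored cone and confirming its orbit structure, together with the reduction underlying both separatedness and completeness that $G$-invariant valuations alone govern the valuative criteria; both rest on the local structure theorem and on the finer properties of $B$-stable valuations on $\Cbb(G/H)$.
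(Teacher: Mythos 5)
The paper does not prove this statement at all: it is the Luna--Vust classification of spherical embeddings, quoted from \cite{Knop89} as background for the rest of the survey, so there is no in-paper argument to compare yours against. Your outline follows the standard route of that reference --- reduction to simple embeddings via the open $G$-stable subsets $X_Y$ attached to closed orbits, verification of the colored-cone axioms, reconstruction and existence for simple embeddings, gluing along colored faces with separatedness and completeness both handled by valuative criteria tested only on $G$-invariant valuations --- and as a strategy this is the correct and essentially the only known one.

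Two caveats. First, one step as you state it is inaccurate: for a simple embedding with closed orbit $Y$, the $B$-semiinvariant functions $f_\chi$ regular on all of $X_Y$ are \emph{not} exactly those with $\chi$ nonnegative on $\mathcal{C}_Y$; regularity also imposes $\langle\chi,\sigma(D)\rangle\geq 0$ for every color $D\in\mathcal{D}\setminus\mathcal{F}_Y$, and these $\sigma(D)$ do not appear in $\mathcal{C}_Y$. The reconstruction must instead be carried out on the $B$-chart $X_{Y,B}=X_Y\setminus\bigcup_{D\in\mathcal{D}\setminus\mathcal{F}_Y}\overline{D}$, whose coordinate ring is cut out precisely by the valuations of the $B$-stable divisors containing $Y$, i.e.\ by the data $(\mathcal{C}_Y,\mathcal{F}_Y)$; this is exactly where the colors enter and is the reason the classification records $\mathcal{F}_Y$ at all. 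Second, the two genuinely hard inputs --- the existence of a normal simple embedding realizing a prescribed colored cone, and the reduction of the valuative criteria of separatedness and properness to $G$-invariant valuations --- are named in your sketch but not argued; they constitute the bulk of \cite{Knop89} and cannot be dispatched in a few lines. As a roadmap your proposal is sound; as a proof it defers precisely the points that make the theorem nontrivial.
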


The set of colors of $X$ (or of $\mathbb{F}_X$) is the union $\mathcal{F}_X:=\bigcup_{(\mathcal{C},\mathcal{F})\in\mathbb{F}}\mathcal{F}$. It is a subset of $\mathcal{D}$.

\subsection{$G$-equivariant morphisms between $G/H$-embeddings}

The $G$-equivariant morphisms between spherical $G$-varieties are very well understood. We state here the description of birational $G$-equivariant morphisms (ie between two embeddings of the same spherical homogeneous space.

Note that if $f:\,X\longrightarrow Y$ is a proper morphism between algebraic varieties such that $\phi_*(\mathcal{O}_X)=\mathcal{O}_Y$, and if $G$ acts on $X$, by a result of Blanchard (see also \cite[Prop. 4.2.1]{BrionSamuelUma}), then there exists a unique action of $G$ on $Y$ such that $f$ is $G$-equivariant. In particular, it is not so restrictive to only consider $G$-equivariant morphisms.

\begin{prop}\label{prop:morph}
Let $G/H$ be a spherical homogeneous space.

Let $(X,x)$, $(Y,y)$ be two $G/H$-embeddings.
Then, there exists a $G$-equivariant morphism $f:\,X\longrightarrow Y$ with $f(x)=y$ if and only if for any colored cone $(\mathcal{C},\mathcal{F})\in\mathbb{F}_X$, there exists a colored cone $(\mathcal{C}',\mathcal{F}')\in\mathbb{F}_Y$ such that $\mathcal{C}\subset\mathcal{C}'$ and $\mathcal{F}\subset\mathcal{F'}$. 
\end{prop}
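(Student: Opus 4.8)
The plan is to reduce at once to the birational situation and then treat the two implications separately, leaning on the orbit--colored-cone dictionary furnished by the classification theorem. Since $f(x)=y$ and both $x,y$ have stabilizer $H$, any such $f$ restricts to the identity on the common open orbit $G/H$; hence $f$ is a birational $G$-morphism, and $X$, $Y$ share the same data $M$, $N$, $\mathcal{V}$, $\sigma$ and set of colors $\mathcal{D}$. Conversely, a morphism to be constructed must extend $\id\colon G/H\to G/H$.

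For the direct implication, suppose $f$ exists. If $Z$ is a $G$-orbit of $X$, then $W:=f(Z)=G\cdot f(z)$ is a single $G$-orbit of $Y$, and I claim $\mathcal{C}_Z\subset\mathcal{C}_W$ and $\mathcal{F}_Z\subset\mathcal{F}_W$, which I would verify on the generators of $\mathcal{C}_Z$. For a color $D\in\mathcal{F}_Z$ one has $Z\subset\overline{D}$ in $X$, so $W=f(Z)\subset f(\overline{D})\subset\overline{D}$ in $Y$; thus $D\in\mathcal{F}_W$ and $\sigma(D)$ is a generator of $\mathcal{C}_W$. For a $G$-stable divisor $D\in\mathcal{D}_{G,Z}$, let $\nu_D$ be the associated $G$-invariant valuation, so that $\sigma(\nu_D)=\sigma(D)$ and $\nu_D$ has a center on $X$ containing $Z$. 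Because $f$ is a morphism, centers push forward: $\nu_D$ has a center on $Y$ whose closure contains $W$, say $\overline{W''}\supset W$, where $W''$ is the orbit with $\sigma(\nu_D)$ in the relative interior of $\mathcal{C}_{W''}$. The inclusion $W\subset\overline{W''}$ means $(\mathcal{C}_{W''},\mathcal{F}_{W''})$ is a colored face of $(\mathcal{C}_W,\mathcal{F}_W)$, whence $\sigma(D)\in\mathcal{C}_{W''}\subset\mathcal{C}_W$. As every generator of $\mathcal{C}_Z$ lies in $\mathcal{C}_W$ and $\mathcal{F}_Z\subset\mathcal{F}_W$, the containment of colored cones follows.

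For the converse, I would build $f$ by gluing morphisms between the simple pieces. Each colored cone $(\mathcal{C},\mathcal{F})\in\mathbb{F}_X$ determines a simple open $G$-subvariety $X_{(\mathcal{C},\mathcal{F})}$ (the $G/H$-embedding whose colored fan consists of the colored faces of $(\mathcal{C},\mathcal{F})$), and the pieces attached to the maximal cones cover $X$; similarly for $Y$. Fixing, for each such $(\mathcal{C},\mathcal{F})$, a containing $(\mathcal{C}',\mathcal{F}')\in\mathbb{F}_Y$, I would produce a $G$-morphism $X_{(\mathcal{C},\mathcal{F})}\to Y_{(\mathcal{C}',\mathcal{F}')}$ extending $\id_{G/H}$. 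Here one uses the explicit description of a simple embedding through its algebra of $B$-eigenfunctions that become regular: whether an eigenfunction of weight $\chi$ is regular on $Y_{(\mathcal{C}',\mathcal{F}')}$ is controlled by nonnegativity against $\mathcal{C}'$ and against the images of the colors in $\mathcal{F}'$, and these conditions only relax under the inclusions $\mathcal{C}\subset\mathcal{C}'$ and $\mathcal{F}\subset\mathcal{F}'$, so every such function pulls back to a regular function on $X_{(\mathcal{C},\mathcal{F})}$, yielding the local morphism. Finally these local morphisms all restrict to $\id$ on the dense orbit $G/H$; since $Y$ is separated, any two agree on overlaps, so they glue to a global $G$-morphism $f\colon X\to Y$ with $f|_{G/H}=\id$, in particular $f(x)=y$.

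I expect the real work to be in the converse, and specifically in the local step: making precise the description of a (possibly non-affine) simple spherical embedding from its colored cone, and checking that cone-plus-color containment is exactly the condition under which the identity of $G/H$ extends to the simple pieces. This is the technical core provided by the Luna--Vust and Knop machinery; by contrast the reduction to simple pieces, the pushing-forward of centers, and the gluing via separatedness are comparatively routine once the orbit--colored-cone correspondence and its face structure are granted.
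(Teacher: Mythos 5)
The paper does not actually prove this proposition: it is recalled without proof as part of the Luna--Vust classification (the reference being \cite{Knop89}), so there is no in-paper argument to compare yours against. Your outline is the standard proof from that theory and is structurally sound: the reduction to the birational setting via $f|_{G/H}=\id$, the pushforward of colors and of centers of $G$-invariant valuations in the ``only if'' direction, and the decomposition into simple pieces followed by gluing over the dense orbit using separatedness of $Y$ in the ``if'' direction are all the right moves, correctly ordered.

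As a self-contained proof, however, it is incomplete at exactly the point you flag, and that point is not a routine verification but the entire content of the statement. In the ``if'' direction, the claim that regularity of a $B$-eigenfunction of weight $\chi$ on the simple piece $Y_{(\mathcal{C}',\mathcal{F}')}$ is ``controlled by nonnegativity against $\mathcal{C}'$ and the images of the colors in $\mathcal{F}'$'' needs care: the relevant object is the affine $B$-chart $Y_0'=Y_{(\mathcal{C}',\mathcal{F}')}\setminus\bigcup_{D\in\mathcal{D}\setminus\mathcal{F}'}\overline{D}$, whose coordinate ring is \emph{not} spanned by $B$-eigenvectors ($B$ is solvable, not a torus), so one must invoke the lemma that a rational function is regular there if and only if the $B$-eigenvectors of the $B$-module it generates are, and then pass from the morphism $X_0\to Y_0'$ to a $G$-morphism on $G\cdot X_0$. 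In the ``only if'' direction you also silently use two further nontrivial pieces of the same machinery: that the center on $Y$ of a $G$-invariant valuation $\nu$ is $\overline{W''}$ for the orbit $W''$ with $\sigma(\nu)$ in the relative interior of $\mathcal{C}_{W''}$, and the colored orbit--face correspondence ($W\subset\overline{W''}$ if and only if $(\mathcal{C}_{W''},\mathcal{F}_{W''})$ is a colored face of $(\mathcal{C}_W,\mathcal{F}_W)$). These are legitimate to grant in a survey context --- and granting them, your argument goes through --- but they are of essentially the same depth as the proposition itself, so what you have is an accurate reduction to the core lemmas of Luna--Vust theory rather than a proof of it.
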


\subsection{Divisors, Cartier divisors}\label{section:div}

Let $G/H$ be a spherical homogeneous space.

Let $(X,x)$ be a $G/H$-embedding associated to the colored fan $\mathbb{F}_X$. Denote by $X_1,\dots,X_m$ the irreducible $G$-stable divisors of $X$ ($m\geq 0$).
For any $i\in\{1,\dots,m\}$, we denote by $x_i$ the image by $\sigma$ of the valuation associated to $X_i$. Recall that it is a primitive element of an edge of $\mathbb{F}_X$ that has no color (i.e. contains no $\sigma(D)$ with $D\in\mathcal{F}_X$).

\begin{prop}\label{prop:Cartiercrit}
\begin{enumerate}
\item Any Weil divisor of $X$ is linearly equivalent to a $B$-stable divisor, i.e. of the form $\delta=\sum_{i=1}^ma_iX_i+\sum_{D\in\mathcal{D}}a_D D$.
\item Such a divisor is Cartier if and only if for any $(\mathcal{C},\mathcal{F})\in\mathbb{F}_X$ there exists $\chi\in M$ such that for any $x_i\in\mathcal{C}$, $\langle\chi,x_i\rangle=a_i$ and for any $D\in\mathcal{F}$, $\langle\chi,\sigma(D)\rangle=a_D$.
\end{enumerate}
\end{prop}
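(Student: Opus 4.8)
\emph{Plan.} For part (1) I would argue that the complement of the open $B$-orbit $U$ of $X$ is exactly the union of the $B$-stable prime divisors, namely the $X_i$ together with the colors $D\in\mathcal{D}$. Given an arbitrary Weil divisor $\delta_0$, its restriction to $U$ is a Weil divisor, and the key point is that $\mathrm{Cl}(U)=0$: indeed $U\cong B/(B\cap H)$ is homogeneous under the connected solvable group $B$, hence isomorphic as a variety to a product of an affine space and a torus, which has trivial divisor class group. Thus $\delta_0|_U=\mathrm{div}(f)|_U$ for some $f\in\Cbb(X)\setminus\{0\}$, and $\delta:=\delta_0-\mathrm{div}(f)\sim\delta_0$ is supported on $X\setminus U$, i.e.\ is $B$-stable of the announced form.

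For part (2) the basic computation is the formula $\mathrm{div}(f_\chi)=\sum_{i=1}^m\langle\chi,x_i\rangle X_i+\sum_{D\in\mathcal{D}}\langle\chi,\sigma(D)\rangle D$ for $\chi\in M$. It follows from the definitions of $x_i$ and $\sigma(D)$ together with the identity $\sigma(\nu)(\chi)=\nu(f_\chi)$ of Definition~\ref{def:valcone}: the order of $f_\chi$ along $X_i$ is $\nu_{X_i}(f_\chi)=\langle\chi,x_i\rangle$, and likewise along each color. By Remark~\ref{rem:multfree} the $B$-semiinvariant rational functions are exactly the nonzero scalar multiples of the $f_\chi$, so the principal $B$-stable divisors are precisely the $\mathrm{div}(f_\chi)$, $\chi\in M$.

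I would then reduce Cartierness to a local condition at the closed orbits. A Weil divisor is Cartier if and only if it is locally principal at every point; since $G$ is connected its divisor class is $G$-invariant and local principality is a linear-equivalence invariant, so the locus where $\delta$ is not locally principal is $G$-stable and closed, whence, if nonempty, it contains a closed $G$-orbit. Therefore $\delta$ is Cartier if and only if it is locally principal along each closed orbit $Y$, which corresponds to a maximal colored cone $(\mathcal{C},\mathcal{F})$; the condition for the remaining cones then follows by passing to colored faces and restricting $\chi$. By the construction of $\mathbb{F}_X$, the $B$-stable prime divisors containing $Y$ are exactly the $X_i$ with $x_i\in\mathcal{C}$ and the colors $D\in\mathcal{F}$, so these are the only coefficients of $\delta$ seen in the local ring $\mathcal{O}_{X,Y}$. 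Now $\delta$ is Cartier along $Y$ exactly when it is principal in $\mathcal{O}_{X,Y}$, and because both $\delta$ and $Y$ are $B$-stable this local equation may be chosen $B$-semiinvariant, hence equal to some $f_\chi$. Comparing with the divisor formula, principality along $Y$ becomes precisely $\langle\chi,x_i\rangle=a_i$ for $x_i\in\mathcal{C}$ and $\langle\chi,\sigma(D)\rangle=a_D$ for $D\in\mathcal{F}$; conversely, such a $\chi$ makes $\delta$ and $\mathrm{div}(f_\chi)$ agree in $\mathcal{O}_{X,Y}$, so $\delta$ is Cartier near $Y$, and running this over the finitely many closed orbits shows $\delta$ is Cartier.

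\emph{Main obstacle.} The two local reductions are the substantive points: that Cartierness can be tested only at the generic points of the closed orbits, and that the local equation of a $B$-stable Cartier divisor near $Y$ can be taken $B$-semiinvariant, so that it is one of the $f_\chi$ with coefficients given by pairing against $M$. Both rely on the local structure theory of spherical varieties — in particular on identifying exactly which $B$-stable prime divisors pass through a given orbit, and on the fact that a $B$-stable Cartier divisor is trivialized near $Y$ by a $B$-eigensection — rather than on formal class-group manipulations. Once these are in place, turning principality in $\mathcal{O}_{X,Y}$ into the numerical condition on $\chi$ via the divisor formula is routine.
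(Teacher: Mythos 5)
The paper does not prove this proposition at all: it is stated in Section~2.7 as recalled background, with the burden carried by the cited reference on divisors of spherical varieties (\cite{Briondiv}), so there is no in-paper argument to compare yours against. That said, your outline is the standard proof and is correct in structure. Part~(1) is fine: the open $B$-orbit is isomorphic to $\mathbb{A}^p\times(\Cbb^*)^q$ by Rosenlicht's theorem on orbits of connected solvable groups, so its class group vanishes and every Weil divisor is linearly equivalent to one supported on the $B$-stable prime divisors. In part~(2) the reduction to closed orbits (the non-locally-principal locus is closed, is $G$-stable because $G$ is connected and local principality is a linear-equivalence invariant, and every nonempty closed $G$-stable set contains a closed orbit) and the identification of the $B$-stable prime divisors through $Y$ with the $x_i\in\mathcal{C}_Y$ and the $D\in\mathcal{F}_Y$ are exactly right.

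The one step you should not wave at is the existence of a $B$-semiinvariant local equation near $Y$. The phrase ``because both $\delta$ and $Y$ are $B$-stable this local equation may be chosen $B$-semiinvariant'' hides a real issue: the $B$-action on all of $\Cbb(X)$ is \emph{not} locally finite, so you cannot simply pick a highest-weight vector in the set of local generators of $\mathcal{O}(\delta)_{\eta_Y}$. The standard repair is to pass to the $B$-stable affine open subset $X_{Y,B}=X_Y\setminus\bigcup_{\overline{D}\not\supset Y}\overline{D}$ furnished by the local structure theorem (the same open set used in the proof of Proposition~\ref{prop:toroidalsmooth}); the sections of $\mathcal{O}(\delta)$ over this affine set form a rational $B$-module, the local generators at the generic point of $Y$ form the complement of a $B$-submodule, and the Lie--Kolchin argument then produces a $B$-eigenvector that generates, which is your $f_\chi$ (alternatively one $G$-linearizes $\mathcal{O}(\delta)$ after replacing $G$ by a finite cover). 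You correctly flag this as the main obstacle, so the gap is one of execution rather than of conception, but as written the key lemma is asserted, not proved.
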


Then any Cartier divisor $\delta=\sum_{i=1}^ma_iX_i+\sum_{D\in\mathcal{D}}a_D D$ of a complete spherical variety $X$ is associated to a unique piecewise linear function $h_\delta$ on $\mathbb{F}_X$ (well-defined function on $\mathcal{V}$), linear on each cone in $\mathbb{F}_X$, such that $\forall i\in\{1,\dots,m\}$, $h_\delta(x_i)=a_i$ and $\forall D\in\mathcal{F}_X$, $h_\delta(\sigma(D))=a_D$. In that case, for any maximal cone $\mathcal{C}$ of $\mathbb{F}_X$ (i.e. for any maximal colored cone $(\mathcal{C},\mathcal{F})$), we denote by $\chi_{\mathcal{C},\delta}$ the element of $M$ associated to the linear function defining $h_\delta$ on $\mathcal{C}$.

\begin{defi}
A piecewise linear function $h_\delta$ on $\mathbb{F}_X$ is convex if:
\begin{itemize}
\item for any maximal cone $\mathcal{C}$ of $\mathbb{F}_X$ and any $x\in N_\Qbb$, we have $h_\delta(x)\geq\langle\chi_{\mathcal{C},\delta},x\rangle$;
\item and for any $D\in\mathcal{D}\backslash\mathcal{F}_X$, $h_\delta(\sigma(D))\leq a_D$.
\end{itemize}

We say that it is strictly convex if:

\begin{itemize}
\item  for any maximal cone $\mathcal{C}$ of $\mathbb{F}_X$ and any $x\in N_\Qbb\backslash \mathcal{C}$, we have $h_\delta(x)>\langle\chi_{\mathcal{C},\delta},x\rangle$;
\item and for any $D\in\mathcal{D}\backslash\mathcal{F}_X$, $h_\delta(\sigma(D))< a_D$.
\end{itemize}
\end{defi}

\begin{prop}\label{prop:amplecrit}
A Cartier divisor $\delta=\sum_{i=1}^ma_iX_i+\sum_{D\in\mathcal{D}}a_D D$ of a complete spherical variety $X$ is globally generated (respectively ample) if and only if $h_\delta$ is convex (respectively strictly convex).
\end{prop}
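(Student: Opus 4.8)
The plan is to reduce the geometric statement to the combinatorics of the $B$-eigensections of the line bundle $L_\delta:=\mathcal{O}_X(\delta)$. Since $\delta$ is $B$-stable, I fix a $G$-linearization of $L_\delta$, so that $H^0(X,L_\delta)$ becomes a rational $G$-module, finite-dimensional because $X$ is complete. Writing $s_\delta$ for the canonical rational section with $\operatorname{div}(s_\delta)=\delta$, every nonzero $B$-eigensection has the form $f_{-\chi}\,s_\delta$ with $\chi\in M$, and from the definition of $\sigma$ one computes $\operatorname{div}(f_{-\chi})=-\sum_i\langle\chi,x_i\rangle X_i-\sum_{D\in\mathcal{D}}\langle\chi,\sigma(D)\rangle D$. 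Hence $f_{-\chi}s_\delta$ is regular (a global section) exactly when $\chi$ lies in
\[
P_\delta=\{\chi\in M_\Qbb\ \mid\ \langle\chi,x_i\rangle\le a_i\ \forall i,\ \ \langle\chi,\sigma(D)\rangle\le a_D\ \forall D\in\mathcal{D}\}.
\]
By Lie--Kolchin, a nonzero rational $B$-module has a $B$-eigenvector, so these eigensections span $H^0(X,L_\delta)$ and suffice to analyze the base locus.

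For global generation I use that $\mathrm{Bs}(L_\delta)$ is a closed $G$-stable subset, hence a union of orbit closures; it is empty iff it contains no closed orbit. By the orbit--cone correspondence the closed orbits $Y_\mathcal{C}$ are those attached to the maximal colored cones $(\mathcal{C},\mathcal{F})$ of $\mathbb{F}_X$, and again by $B$-semisimplicity $Y_\mathcal{C}$ avoids $\mathrm{Bs}(L_\delta)$ iff some eigensection $f_{-\chi}s_\delta$ is nonzero on $Y_\mathcal{C}$. Since $\operatorname{div}(f_{-\chi})+\delta\ge 0$ is $B$-stable, its components lie among the $X_i$ and the $\overline{D}$, and the prime divisors containing $Y_\mathcal{C}$ are exactly the $X_i$ with $x_i\in\mathcal{C}$ and the $\overline{D}$ with $D\in\mathcal{F}$; so $f_{-\chi}s_\delta$ is nonvanishing on $Y_\mathcal{C}$ precisely when $\langle\chi,x_i\rangle=a_i$ for $x_i\in\mathcal{C}$ and $\langle\chi,\sigma(D)\rangle=a_D$ for $D\in\mathcal{F}$, which determine $\chi=\chi_{\mathcal{C},\delta}$. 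Thus $L_\delta$ is globally generated iff $\chi_{\mathcal{C},\delta}\in P_\delta$ for every maximal cone $\mathcal{C}$; unwinding $\langle\chi_{\mathcal{C},\delta},x_i\rangle\le a_i=h_\delta(x_i)$ and $\langle\chi_{\mathcal{C},\delta},\sigma(D)\rangle\le a_D$ (for $D\in\mathcal{F}_X$ via $h_\delta(\sigma(D))=a_D$, for $D\in\mathcal{D}\setminus\mathcal{F}_X$ via the second clause of the definition, using $\langle\chi_{\mathcal{C},\delta},\sigma(D)\rangle\le h_\delta(\sigma(D))$) is exactly convexity of $h_\delta$.

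For ampleness, ample implies globally generated implies $h_\delta$ convex, so it remains to upgrade convex to strictly convex. A globally generated line bundle on a complete variety is ample iff the $G$-equivariant morphism $\phi:X\to\mathbb{P}(H^0(X,L_\delta)^\vee)$ it defines is finite, i.e. contracts no curve; since $\phi$ is $G$-equivariant it is enough to test this on the closed orbits and on the walls between adjacent maximal cones. The closed orbit $Y_\mathcal{C}$ maps to the coordinate point dual to the eigensection $f_{-\chi_{\mathcal{C},\delta}}s_\delta$, so $\phi$ separates two closed orbits iff the associated $\chi_{\mathcal{C},\delta}$ differ, and it contracts no curve across their common wall iff the two linear pieces of $h_\delta$ genuinely differ there; together these say $h_\delta(x)>\langle\chi_{\mathcal{C},\delta},x\rangle$ for $x\notin\mathcal{C}$ and $h_\delta(\sigma(D))<a_D$ for $D\in\mathcal{D}\setminus\mathcal{F}_X$, i.e. strict convexity. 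Conversely, failure of strict convexity yields a point outside some $\mathcal{C}$ (or a color) where equality holds, producing an orbit or curve on which $\phi$ is constant, so $L_\delta$ is not ample.

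The main obstacle will be the ampleness half, and specifically the bookkeeping of the colors. Global generation is essentially the toric argument transported through the eigensection dictionary, but for ampleness one must show that strict convexity is precisely what prevents $\phi$ from either identifying two closed orbits or contracting a curve joining adjacent ones, and one must handle with care the colors in $\mathcal{D}\setminus\mathcal{F}_X$, whose images $\sigma(D)$ are not rays of $\mathbb{F}_X$ yet still cut out $P_\delta$; matching the strict inequality $h_\delta(\sigma(D))<a_D$ with genuine separation in $\mathbb{P}(H^0(X,L_\delta)^\vee)$ is the delicate point. An alternative that lightens the geometry is to restrict to the closure of a generic torus orbit and reduce strict convexity to the classical toric ampleness criterion, but the color contributions then have to be reinstated separately.
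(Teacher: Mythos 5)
First, a point of comparison: the paper itself gives no proof of Proposition~\ref{prop:amplecrit}; it is quoted as known, from \cite{Briondiv}. Your strategy is nevertheless the standard one, and your global-generation half is essentially sound: the $B$-eigensections of $\mathcal{O}_X(\delta)$ are the $f_{-\chi}s_\delta$ with $\chi\in P_\delta\cap M$, the base locus is closed and $G$-stable hence detected on closed orbits, and a closed orbit $Y_\mathcal{C}$ escapes it exactly when $\chi_{\mathcal{C},\delta}\in P_\delta$. Two repairs are needed even there: ``$B$-semisimplicity'' is not the right tool --- eigenvectors of a $B$-module need not span it; what you should say is that the sections vanishing on the $G$-stable subvariety $Y_\mathcal{C}$ form a $G$-submodule of the semisimple $G$-module $H^0(X,\mathcal{O}_X(\delta))$, and Lie--Kolchin applied to a $G$-stable complement produces the required eigensection. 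You should also justify that the equalities on the generators of a maximal colored cone determine $\chi$ uniquely (completeness of $\mathbb{F}_X$ forces a maximal cone, together with its colors, to span $N_\Qbb$).

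The genuine gaps are in the ampleness half. (i) You use ``ample implies globally generated'', which is false for arbitrary complete varieties and is itself a nontrivial theorem for spherical ones; the cheap fix is to replace $\delta$ by a very ample multiple $k\delta$ and note that $h_{k\delta}=kh_\delta$ has the same (strict) convexity. (ii) The reduction of finiteness of $\phi$ to ``closed orbits and walls'' is asserted, not proved, and the intermediate claim that $Y_\mathcal{C}$ maps to a single coordinate point is wrong in general: $Y_\mathcal{C}$ is a flag variety $G/Q$, usually of positive dimension (take $X=G/P$ itself), so $\phi(Y_\mathcal{C})$ is again a flag variety and your ``separation of closed orbits iff the $\chi_{\mathcal{C},\delta}$ differ'' does not follow. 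What actually closes the argument is one of the following. Either show that under strict convexity the non-vanishing locus of $s_{\chi_{\mathcal{C},\delta}}:=f_{-\chi_{\mathcal{C},\delta}}s_\delta$ is exactly the canonical $B$-stable affine chart $X\setminus\bigl(\bigcup_{x_i\notin\mathcal{C}}X_i\cup\bigcup_{D\in\mathcal{D}\setminus\mathcal{F}}\overline{D}\bigr)$ of the Luna--Vust theory --- the strict inequalities are precisely what puts every one of these divisors in the support of $\operatorname{div}(s_{\chi_{\mathcal{C},\delta}})$ --- so that $X$ is covered by affine sets of the form $X_s$ and $\mathcal{O}_X(\delta)$ is ample by the standard criterion; this also handles the colors in $\mathcal{D}\setminus\mathcal{F}_X$ uniformly, which is the point you flagged as delicate. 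Or compute $(\delta\cdot C)$ on the $B$-stable curves of $X$ as in \cite{Brioncurves} and check positivity. For the converse, rather than arguing about contracted curves, exhibit the contraction combinatorially: if strict convexity fails, merging the colored cones on which $h_\delta$ is linear (and adding the colors $D$ with $h_\delta(\sigma(D))=a_D$) yields, via Proposition~\ref{prop:morph}, a dominant non-isomorphic $G$-morphism $X\to X'$ through which $\delta$ factors up to linear equivalence, so $\delta$ is not ample. As written, your second half records the correct statements but does not yet prove them.
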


Anticanonical divisors of spherical varieties are described in \cite[Propostion~4.1]{Brioncurves} and in \cite[Section~3.6]{Luna}, by dividing the colors of $G/H$ into three types. The description of these three types was recently reconsider in \cite{Knop14} in a more simple way by using that $B$-orbits of $G/H$ correspond to $H$-orbits in $B\backslash G$.  In the following theorem, we gather together their results that we will need later.

\begin{teo}\label{th:canonicaldiv}
Let $G/H$ be a spherical homogeneous space. Let $D\in\mathcal{D}$.

Choose a simple root $\alpha$ of $G$, such that $P_\alpha\cdot D\neq D$.

Recall that the spherical roots of $G/H$ are the primitive elements of $-\mathcal{V}^\vee$.
Then one and only one of the following case occurs.

\begin{itemize}
\item[(a)] $\alpha$ is a spherical root of $G/H$.
\item[(2a)] $2\alpha$ is a spherical root of $G/H$.
\item[(b)] neither $\alpha$ nor $2\alpha$ is a spherical root of $G/H$.
\end{itemize}

Moreover, the case occurring does not depend on the choice of $\alpha$. Then, we say that $D$ is of type (a), (2a) and (b) respectively.\\

Denote by $\alpha^\vee_M$ the restriction to $M$ of the coroot $\alpha^\vee$; it is an element of $N$.

Then, the images $\sigma(D)$ of $D$ in $N_\Qbb$ satisfy, respectively in each case:

\begin{itemize}
\item[(a)] $\langle\alpha,\sigma(D)\rangle=1$;
\item[(2a)] $\sigma(D)=\frac{1}{2}\alpha^\vee_M$, in particular $\langle\alpha,\sigma(D)\rangle=1$;
\item[(b)] $\sigma(D)=\alpha^\vee_M$.
\end{itemize}

Denote by $P$ the stabilizer in $G$ of the open $B$-orbit of $G/H$ (it is a parabolic subgroup of $G$ containing $B$), and denote by $\mathcal{S}_P$ the set of simple roots $\alpha$ of $(G,B,T)$ such that $-\alpha$ is not a weight of the Lie algebra of $P$).

For any $D\in\mathcal{D}$, we define an integer $a_D$ as follows: if $D$ is of type (a) or (2a), $a_D=1$; and if $D$ is of type (b), $a_D=\langle\sum_{\alpha\in \mathcal{R}^+_{P}}\alpha,\alpha^\vee\rangle$ (which is greater or equal to 2), where $\mathcal{R}^+_{P}$ is the set of positive roots with at least one non-zero coefficient for a simple root of $\mathcal{S}_P$.\\

Let $(X,x)$ be a $G/H$-embedding. Denote by $\mathcal{D}_X$ the set of irreducible $G$-stable divisors of $X$.
Then, an anticanonical divisor of a $G$-spherical embedding $(X,x)$ associated to a colored fan $\mathbb{F}_X$ is $$-K_X=\sum_{D\in\mathcal{D}_X}D+\sum_{D\in \mathcal{D}}a_D D.$$
\end{teo}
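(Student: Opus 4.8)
The plan is to split the statement into its three independent assertions---the trichotomy (a)/(2a)/(b) together with the independence of $\alpha$, the formulas for $\sigma(D)$, and the anticanonical formula---and to reduce each to a rank-one computation via minimal parabolics; none of the three is proved from scratch here, the role of the argument being to assemble the results of \cite{Brioncurves}, \cite{Luna} and \cite{Knop14} into one coherent statement. For the trichotomy, fix a simple root $\alpha$ with $P_\alpha\cdot D\neq D$ and consider the minimal parabolic $B\subset P_\alpha$ with its Levi $SL_2$ (or $PGL_2$). Since $P$ is connected it fixes every color, so if $\alpha$ were a root of the Levi of $P$ we would have $P_\alpha\subset P$ fixing $D$; hence $\alpha\in\mathcal{S}_P$, the orbit $P_\alpha\cdot\overline{D}$ is swept out by the $\mathbb{P}^1=P_\alpha/B$, and the local structure theorem reduces $G/H$ near the generic point of $D$ to a spherical homogeneous space of rank one for this $SL_2$. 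The classification of rank-one spherical $SL_2$-spaces yields exactly three germs, and recording whether the primitive generator of the corresponding ray of $-\mathcal{V}^\vee$ is $\alpha$, $2\alpha$, or neither produces precisely the cases (a), (2a), (b).

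I would then read the formulas for $\sigma(D)$ off the same rank-one models. Since $\langle\chi,\sigma(D)\rangle=v_D(f_\chi)$ and $f_\chi$ has $B$-weight $\chi$, the order of vanishing of $f_\chi$ along $D$ is computed inside the $SL_2$-slice by applying the lowering operator for $-\alpha$, which shifts the weight by $\alpha$ and detects the pairing $\langle\chi,\alpha^\vee\rangle$. In case (b) this gives $v_D(f_\chi)=\langle\chi,\alpha^\vee\rangle$ for all $\chi\in M$, i.e. $\sigma(D)=\alpha^\vee_M$; in case (2a) the spherical root is $2\alpha$ rather than $\alpha$, which halves the valuation and yields $\sigma(D)=\tfrac12\alpha^\vee_M$; and in case (a) each of the two colors moved by $\alpha$ satisfies $\langle\alpha,\sigma(D)\rangle=1$. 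The independence of the occurring case on $\alpha$ then follows because these explicit formulas express the intrinsic datum $\sigma(D)\in N$ in terms of $\alpha^\vee_M$: if two simple roots both move $D$, the two resulting descriptions of $\sigma(D)$ must coincide, forcing the same case.

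For the anticanonical formula I would fix a nonzero $B$-semiinvariant rational section $\omega$ of the anticanonical sheaf, so that $-K_X$ is linearly equivalent to the $B$-stable divisor $\operatorname{div}(\omega)$, which after normalization is effective of the form $\sum_i a_iX_i+\sum_{D\in\mathcal{D}}a_D D$. Along a $G$-stable divisor $X_i$ the computation is the toric one and gives coefficient $1$, producing the term $\sum_{D\in\mathcal{D}_X}D$. Along a color $D$ the order of vanishing of $\omega$ is governed by the $B$-action on the top exterior power of the tangent space of $G/P$, giving coefficient $1$ for types (a) and (2a) and $\langle\sum_{\beta\in\mathcal{R}^+_P}\beta,\alpha^\vee\rangle$ for type (b). Writing $2\rho$ and $2\rho_L$ for the sums of the positive roots of $G$ and of the Levi factor $L$ of $P$, so that $\sum_{\beta\in\mathcal{R}^+_P}\beta=2\rho-2\rho_L$, the bound $a_D\geq2$ is then immediate: $\langle2\rho,\alpha^\vee\rangle=2$, while every positive root of $L$ is a nonnegative combination of simple roots distinct from $\alpha$ and hence pairs nonpositively with $\alpha^\vee$, so $\langle2\rho_L,\alpha^\vee\rangle\leq0$.

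The hard part will be the rank-one reduction together with the proof that the three cases are exhaustive, mutually exclusive, and independent of the chosen $\alpha$: this is where the full strength of the $SL_2$-classification of spherical germs and of the correspondence of \cite{Knop14} between $B$-orbits in $G/H$ and $H$-orbits in $B\backslash G$ is needed, in order to control how colors split or merge under $P_\alpha$. Once this combinatorial dictionary is in place, the valuation computations giving $\sigma(D)$ and the order-of-vanishing computation giving $a_D$ are routine local calculations.
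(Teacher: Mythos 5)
The paper offers no proof of this theorem: it is stated explicitly as a compilation of results from \cite{Brioncurves}, \cite{Luna} and \cite{Knop14}, so there is no in-text argument to compare yours against, and a proposal that organizes and defers to those references is exactly in the spirit of the survey. Your sketch is consistent with how those references establish the statement --- reduction to the minimal parabolic $P_\alpha$ via the local structure theorem, the trichotomy coming from the three possible images (torus, normalizer of a torus, subgroup containing the unipotent radical of a Borel) of the relevant stabilizer in $\PGL_2$, and Brion's computation of the divisor of a $B$-semiinvariant section of the anticanonical sheaf --- and your derivation of $a_D\geq 2$ from $\sum_{\beta\in\mathcal{R}^+_P}\beta=2\rho-2\rho_L$, $\langle 2\rho,\alpha^\vee\rangle=2$ and $\langle 2\rho_L,\alpha^\vee\rangle\leq 0$ is correct; it uses, as you note, that $\alpha\in\mathcal{S}_P$, which indeed follows from $P_\alpha\cdot D\neq D$ because the connected group $P$ fixes each color. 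The one place where your self-contained reasoning is too thin is the independence of the type on the choice of $\alpha$: in case (a) the stated formula only gives the single pairing $\langle\alpha,\sigma(D)\rangle=1$ and does not determine $\sigma(D)$, so the argument ``the two descriptions of $\sigma(D)$ must coincide, forcing the same case'' is not literally available. What one does instead is rule out each mixed pair of cases for two distinct simple roots $\alpha\neq\beta$ moving $D$ by a sign contradiction: type (b) for $\beta$ together with type (a) for $\alpha$ would force $\langle\alpha,\beta^\vee\rangle=1>0$, type (2a) for $\beta$ together with type (a) for $\alpha$ would force $\langle\alpha,\beta^\vee\rangle=2$, and similarly for the remaining combinations, all impossible since distinct simple roots pair nonpositively. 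With that repair your outline matches the standard proofs the paper is quoting.
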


\begin{cor}\label{cor:canonicaldiv}
For any $B$-stable irreducible divisor $D$ of $X$, the coefficient attached to $D$ in $-K_X$ a positive integer. Moreover, if $D$ is not $G$-stable and $a_D=1$, then $\sigma(D)$ is not in the valuation cone $\mathcal{V}$.
\end{cor}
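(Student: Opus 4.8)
The plan is to read off both statements directly from the explicit description of the anticanonical divisor given in Theorem~\ref{th:canonicaldiv}, namely $-K_X=\sum_{D\in\mathcal{D}_X}D+\sum_{D\in\mathcal{D}}a_D D$. For the first assertion, I would split the $B$-stable irreducible divisors $D$ of $X$ into the two families appearing in this sum. If $D$ is $G$-stable (i.e.\ $D\in\mathcal{D}_X$), its coefficient is $1$, which is a positive integer. If $D$ is not $G$-stable, then $D$ is a color (an element of $\mathcal{D}$), and its coefficient is $a_D$; by the definition of $a_D$ in Theorem~\ref{th:canonicaldiv}, $a_D=1$ in types (a) and (2a), while in type (b) one has $a_D=\langle\sum_{\alpha\in\mathcal{R}^+_{P}}\alpha,\alpha^\vee\rangle\geq 2$. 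In every case the coefficient is a positive integer, which settles the first claim.

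For the second assertion, suppose $D$ is a non-$G$-stable $B$-stable divisor (hence a color) with $a_D=1$. By the trichotomy, $a_D=1$ forces $D$ to be of type (a) or type (2a), since type (b) always yields $a_D\geq 2$. In either of these two cases, Theorem~\ref{th:canonicaldiv} records that $\langle\alpha,\sigma(D)\rangle=1$, where $\alpha$ is a simple root with $P_\alpha\cdot D\neq D$ (in type (a), $\alpha$ is itself a spherical root; in type (2a), $2\alpha$ is). The key point I would then invoke is the structural description of the valuation cone from Remark~\ref{rem:valcone}: $\mathcal{V}$ is the set where the dual pairing against the spherical roots is nonpositive, i.e.\ $\mathcal{V}=\{v\in N_\Qbb\mid \langle\gamma,v\rangle\leq 0 \text{ for every spherical root }\gamma\}$, this being exactly the statement that $-\mathcal{V}^\vee$ is generated by the spherical roots.

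It then remains to produce a spherical root $\gamma$ with $\langle\gamma,\sigma(D)\rangle>0$, which would show $\sigma(D)\notin\mathcal{V}$. In type (a), take $\gamma=\alpha$, which is a spherical root, and $\langle\alpha,\sigma(D)\rangle=1>0$. In type (2a), take $\gamma=2\alpha$, which is a spherical root, and $\langle 2\alpha,\sigma(D)\rangle=2\langle\alpha,\sigma(D)\rangle=2>0$. In both cases the pairing against a spherical root is strictly positive, so $\sigma(D)$ violates the defining inequality of $\mathcal{V}$ and hence lies outside the valuation cone.

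The step I expect to require the most care is the precise identification of $\mathcal{V}$ as the polyhedral cone cut out by the inequalities $\langle\gamma,\cdot\rangle\leq 0$ over spherical roots $\gamma$: this is the dual reformulation of the statement in Remark~\ref{rem:valcone} that $-\mathcal{V}^\vee$ is generated by the spherical roots, and getting the sign convention right (so that the strict positivity of $\langle\gamma,\sigma(D)\rangle$ genuinely excludes $\sigma(D)$ from $\mathcal{V}$ rather than placing it inside) is the only genuinely delicate point; the rest is bookkeeping with the three types.
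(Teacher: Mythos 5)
Your proposal is correct and follows essentially the same route as the paper: read the coefficients off Theorem~\ref{th:canonicaldiv}, note that $a_D=1$ forces type (a) or (2a) so that $\alpha$ or $2\alpha$ lies in $-\mathcal{V}^\vee$, and conclude from $\langle\alpha,\sigma(D)\rangle=1>0$ that $\sigma(D)\notin\mathcal{V}$. The only cosmetic difference is that you state the full bidual description of $\mathcal{V}$, whereas the argument (and the paper) only needs the easy containment $\mathcal{V}\subset\{v\mid\langle\gamma,v\rangle\leq 0\text{ for spherical roots }\gamma\}$.
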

\begin{proof}
If $a_D=1$, then $D$ is of type {\it (a)} or {\it (2a)}. In particular, $\alpha$ or $2\alpha$ is in $-\mathcal{V}^\vee$.
 Hence, for any $v\in \mathcal{V}\subset N$, we get $\langle\alpha,v\rangle\leq 0$. But we also have that $\langle\alpha,\sigma(D)\rangle=1$. Thus $\sigma(D)$ is not in $\mathcal{V}$.
\end{proof}

\begin{rem}
If $\sigma(D)$ is not in $\mathcal{V}$, $a_D$ is not necessary 1. For example, if $G=\rm{SL}_2\times\rm{SL}_2$ and $H$ is the diagonal in $G$, then $G/H$ has only one spherical root that is the strongly orthogonal sum of the two simple roots; in particular, the unique $B$-stable divisor $D$ of $G/H$ is of type {\it (b)} and an easy computation gives $a_D=2$.
\end{rem}

\section{Smooth, locally factorial and $\Qbb$-factorial varieties}

\begin{defi}
A variety $X$ is locally factorial if all Weil divisors of $X$ are Cartier.

A variety $X$ is $\Qbb$-factorial if all Weil divisors of $X$ are $\Qbb$-Cartier.
\end{defi}

The smoothness of spherical varieties is the type of singularities that is the most complicated to characterize for spherical varieties.
For toric varieties, it is not difficult and well-known (see for example \cite{Fulton}). For horospherical varieties, there is a more complicated criterion, simultaneously obtained in \cite{these} and \cite{Timashev}, which mixes the combinatorial aspects of colored fans and root systems. For general spherical varieties, a smoothness criterion was first given in~\cite{Brionsmooth}, and a more practical one was recently given in \cite{Gagliardi}. Moreover, if we admit a conjecture (satisfied by horospherical varieties and symmetric varieties), then we get a very simple smoothness criterion \cite{Gagliardi-Hofscheier2}.

In this paper, we will not write these smoothness criteria, but we have to note that the main tool of their proofs is the local structure of spherical varieties. This tool easily permits to prove the following useful (and well-known) result.

\begin{prop}\label{prop:toroidalsmooth}
Let $X$ be a locally factorial spherical variety such that $\mathcal{F}_X$ is empty. Then $X$ is smooth.
\end{prop}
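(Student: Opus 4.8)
The idea is to reduce the smoothness of $X$ to that of an associated toric variety by means of the local structure theorem for spherical varieties, and then to invoke the elementary fact that a locally factorial toric variety is smooth. The hypothesis $\mathcal{F}_X=\emptyset$ (that is, $X$ is toroidal) is exactly what makes this reduction clean: since no colored cone of $\mathbb{F}_X$ contains any $\sigma(D)$, every cone $\mathcal{C}_Y$ lies in the valuation cone $\mathcal{V}$ and the colors play no role in the combinatorics.

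First I would recall the parabolic $P\supset B$ stabilizing the open $B$-orbit, together with a Levi decomposition $P=L\ltimes P_u$. The local structure theorem provides a $B$-stable affine open subset $X_0$ of $X$ and an $L$-stable locally closed affine \emph{slice} $Z\subset X_0$ such that the action map $P_u\times Z\longrightarrow X_0$ is an isomorphism, where $L$ acts on $Z$ through a quotient torus whose character lattice is $M$. In the toroidal case $X_0$ may be taken to be the complement in $X$ of the closures of the colors, so that $X_0$ meets every $G$-orbit and $G\cdot X_0=X$. Since $P_u$ is unipotent, hence isomorphic to an affine space, $X_0$ is smooth if and only if $Z$ is smooth; and as smoothness is $G$-invariant and $G\cdot X_0=X$, the variety $X$ is smooth if and only if $Z$ is. Moreover $Z$ is precisely the toric variety, for the above torus with cocharacter lattice $N$, whose fan is the collection of cones $\mathcal{C}_Y$ as $Y$ ranges over the $G$-orbits of $X$.

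It then remains to transfer local factoriality from $X$ to $Z$ and to conclude by the toric criterion. By Proposition~\ref{prop:Cartiercrit}, applied with $\mathcal{F}=\emptyset$, a $B$-stable divisor $\sum_i a_i X_i$ is Cartier on $X$ exactly when, for each maximal cone $\mathcal{C}_Y$ with primitive ray generators $x_{i_1},\dots,x_{i_k}$, there is some $\chi\in M$ with $\langle\chi,x_{i_j}\rangle=a_{i_j}$ for all $j$. Requiring this for every choice of coefficients, that is, demanding that all Weil divisors be Cartier, forces for each $\mathcal{C}_Y$ and each $j$ the existence of $\chi_j\in M$ with $\langle\chi_j,x_{i_l}\rangle=\delta_{jl}$; equivalently, the $x_{i_j}$ are linearly independent and form part of a $\Zbb$-basis of $N$. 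This is exactly the statement that each cone of the fan of $Z$ is unimodular, i.e. that $Z$ is locally factorial as a toric variety, and hence smooth. Therefore $X$ is smooth.

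The main obstacle is the careful invocation of the local structure theorem: one must ensure, in the toroidal case, that the slice $Z$ can be chosen so as to meet every $G$-orbit (so that $G\cdot X_0=X$), and that $Z$ is genuinely a toric variety whose defining lattice is the same character lattice $M$ appearing in Proposition~\ref{prop:Cartiercrit}. Matching these two occurrences of $M$ is precisely what makes the Cartier criterion coincide with the toric unimodularity condition; once this identification is secured, the remainder is the standard toric argument.
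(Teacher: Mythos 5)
Your proof is correct and follows essentially the same route as the paper: invoke the local structure theorem to identify the colour-free open set $X\backslash\bigcup_{D\in\mathcal{D}}\overline{D}$ with $R_u(P)\times Z$ for a toric slice $Z$ meeting every $G$-orbit when $\mathcal{F}_X=\emptyset$, transfer local factoriality to $Z$, and conclude by the toric fact that locally factorial cones are unimodular. Your explicit verification of the unimodularity via the Cartier criterion of Proposition~\ref{prop:Cartiercrit} only spells out a step the paper leaves implicit.
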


\begin{proof}
Let $G/H$ be a spherical homogeneous space, and $(X,x)$ be a $G/H$-embedding.
Denote by $P$ the stabilizer in $G$ of the open $B$-orbit of $G/H$ (it is a parabolic subgroup of $G$ containing $B$).
Then, by \cite[Proposition~3.4]{BrionPauer}, a $P$-stable open set $\mathcal{U}$ of $X$ is isomorphic to $R_u(P)\times Z$, where $Z$ is a toric variety under the action of the neutral component of a Levi of $P$. Moreover, this open  
set $\mathcal{U}$ equals $X\backslash\bigcup_{D\in\mathcal{D}}\overline{D}$ (where $\overline{D}$ is the closure of $D$ in $X$). If $\mathcal{F}_X$ is empty, then $\mathcal{U}$ intersects every closed $G$-orbit.
If $X$ is locally factorial, then $Z$ is a locally factorial toric variety and so $Z$ is smooth. We conclude that, if $X$ is locally factorial with $\mathcal{F}_X$ empty, $X$ is smooth along all its closed $G$-orbits. Hence $X$ is smooth.
\end{proof}

We now write and prove locally factorial and $\Qbb$-factorial criteria.

Using the criterion of Cartier divisors of spherical varieties (Proposition~\ref{prop:Cartiercrit}), we get the following result.

\begin{prop}\label{prop:factoriality}
Let $X$ be a spherical variety associated to a colored fan $\mathbb{F}_X$.
Then $X$ is locally factorial (respectively $\Qbb$-factorial) if and only if for any $(\mathcal{C},\mathcal{F})\in\mathbb{F}_X$, the colors of $\mathcal{F}$ have distinct images in $N_\Qbb$ and there exists a basis $(u_1,\dots,u_k)\cup(\sigma(D))_{D\in\mathcal{F}}$ of the lattice $N$ (respectively of the vector space $N_\Qbb$) such that $\mathcal{C}$ is generated by the family $(u_1,\dots,u_{k'})\cup(\sigma(D))_{D\in\mathcal{F}}$ (where $k'\leq k$ are non-negative integers).
\end{prop}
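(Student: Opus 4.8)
The plan is to reduce the factoriality statement to a purely combinatorial condition on each maximal colored cone by exploiting the Cartier divisor criterion from Proposition~\ref{prop:Cartiercrit}, and then to recognize that condition as a statement about when a collection of lattice points forms part of a lattice basis. First I would fix a maximal colored cone $(\mathcal{C},\mathcal{F})\in\mathbb{F}_X$ and recall that the relevant $B$-stable divisors are the $X_i$ with $x_i\in\mathcal{C}$ (the $G$-stable ones whose rays generate $\mathcal{C}$ without color) together with the colors $D\in\mathcal{F}$. By Proposition~\ref{prop:Cartiercrit}~(1), every Weil divisor is linearly equivalent to a $B$-stable divisor $\delta=\sum a_iX_i+\sum a_D D$, so testing local factoriality amounts to asking: for every choice of integer coefficients $(a_i)$ on the relevant $x_i$ and $(a_D)$ on $D\in\mathcal{F}$, does there exist $\chi\in M$ with $\langle\chi,x_i\rangle=a_i$ and $\langle\chi,\sigma(D)\rangle=a_D$? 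This is exactly the Cartier condition of Proposition~\ref{prop:Cartiercrit}~(2) applied cone by cone.

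The key reformulation is then linear-algebraic. Let $S_{\mathcal{C}}=\{x_i:x_i\in\mathcal{C}\}\cup\{\sigma(D):D\in\mathcal{F}\}$ be the generating set of lattice points attached to the cone. The pairing $\chi\mapsto(\langle\chi,s\rangle)_{s\in S_{\mathcal{C}}}$ defines a map $M\to\Zbb^{S_{\mathcal{C}}}$, and local factoriality at this cone is precisely the surjectivity of this map for arbitrary prescribed integer values; that is, the elements of $S_{\mathcal{C}}$ must extend to a dual basis, which is equivalent to $S_{\mathcal{C}}$ being part of a $\Zbb$-basis of $N$. For this to make sense the points of $S_{\mathcal{C}}$ must first be distinct and linearly independent: if two colors in $\mathcal{F}$ shared the same image $\sigma(D)$ one could not prescribe distinct coefficients $a_D$ consistently, and if the points were dependent one could find an integer relation obstructing the existence of a solution $\chi$. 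Conversely, a family of lattice points is part of a basis of $N$ exactly when it is unimodular, i.e.\ spans a saturated (primitive) sublattice, which is the same as saying every integer linear functional on its span extends to all of $N$. Completing $S_{\mathcal{C}}$ by additional primitive generators $u_1,\dots,u_{k'}$ of $\mathcal{C}$ that carry no color, and then to a full basis $(u_1,\dots,u_k)\cup(\sigma(D))_{D\in\mathcal{F}}$ of $N$, packages precisely this requirement, giving the stated criterion. The $\Qbb$-factorial case is identical with $\Zbb$ replaced by $\Qbb$: one only needs $S_{\mathcal{C}}$ to be linearly independent over $\Qbb$, i.e.\ to extend to a $\Qbb$-basis of $N_\Qbb$, which is the simplicial condition on $\mathcal{C}$ together with distinctness of the colored images.

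I would organize the proof as two implications. For the forward direction, assuming local factoriality (resp. $\Qbb$-factoriality), I would take a Weil divisor supported on a single $X_i$ or a single color and use the Cartier criterion to force the existence of a $\chi\in M$ separating that generator, then iterate over all generators to produce a dual family to $S_{\mathcal{C}}$; distinctness of the colored images drops out because otherwise no $\chi$ can separate two coincident points with different prescribed coefficients, and the dual family being well-defined forces $S_{\mathcal{C}}$ to be unimodular and hence extendable to a basis. For the converse, given the basis (resp. $\Qbb$-basis) hypothesis, any prescribed integer (resp. rational) coefficients on the generators determine a unique $\chi$ on the span via the dual basis, and one extends it by zero on the complementary basis vectors to obtain the required $\chi\in M$ (resp. $\chi\in M_\Qbb$), verifying the Cartier condition on $(\mathcal{C},\mathcal{F})$ and hence globally as the cone was arbitrary.

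The main obstacle I anticipate is the bookkeeping around colors versus $G$-stable divisors and the role of the $u_i$ that do not appear as images of any color. One must be careful that the $x_i$ generating $\mathcal{C}$ are primitive ray generators lying in $N$ and that the criterion only constrains the generators actually belonging to $\mathcal{C}$, while the colored images $\sigma(D)$ for $D\in\mathcal{F}$ must be simultaneously included in the spanning family even when $\sigma(D)$ does not generate a ray of $\mathcal{C}$. Ensuring that ``extends to a basis of $N$'' is genuinely equivalent to the surjectivity of the evaluation pairing $M\to\Zbb^{S_{\mathcal{C}}}$, and handling the subtlety that local factoriality is a condition about $\emph{all}$ Weil divisors rather than just the $B$-stable generators (resolved immediately by Proposition~\ref{prop:Cartiercrit}~(1)), is where the argument needs the most care; the rest is a standard translation between unimodularity of a point configuration and extendability to a lattice basis.
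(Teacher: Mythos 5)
Your proposal is correct and follows essentially the same route as the paper: both reduce via Proposition~\ref{prop:Cartiercrit} to applying the Cartier criterion to the individual prime divisors $X_i$ and $D\in\mathcal{F}$, extract from this a dual family of characters in $M$ (resp.\ $M_\Qbb$) that forces distinctness of the $\sigma(D)$ and linear independence of the generators, and then, in the locally factorial case, observe that these dual functionals make the sublattice spanned by the generators saturated so that it extends to a basis of $N$ (the paper's Lemma~\ref{lem:latticebasis}, your ``unimodularity'' reformulation). The only cosmetic difference is that you package the argument as surjectivity of the evaluation pairing $M\to\Zbb^{S_{\mathcal{C}}}$, which is the same computation.
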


\begin{proof}
The "if" part is quite easy, so we prove only the "only if" part.

Suppose that $X$ is $\Qbb$-factorial. Let $(\mathcal{C},\mathcal{F})\in\mathbb{F}_X$, and $D_1$, $D_2$ be two colors in $\mathcal{F}$. The Weil divisor $\delta=D_1$ is $\Qbb$-Cartier, so that there exists $\chi_{\mathcal{C},\delta}\in M_\Qbb$ such that $\langle\chi_{\mathcal{C},\delta},\sigma(D_1)\rangle=1$ and $\langle\chi_{\mathcal{C},\delta},\sigma(D_2)\rangle=0$. In particular, $\sigma(D_1)\neq\sigma(D_2)$. 
Now, denote by $u_1,\dots,u_{k'}$ the primitive elements of the edges of $\mathcal{C}$ that are not generated by some $\sigma(D)$ with $D\in\mathcal{F}$. We can suppose that $u_1,\dots,u_{k'}$ respectively correspond to the $G$-stable irreducible divisors $X_1,\dots,X_{k'}$.

For any $i\in\{1,\dots,k'\}$, $X_i$ is $\Qbb$-Cartier, so there exists $\chi_{\mathcal{C},X_i}\in M_\Qbb$ such that $\langle\chi_{\mathcal{C},X_i},u_i\rangle=1$, $\langle\chi_{\mathcal{C},X_i},u_j\rangle=0$, for any $j\in\{1,\dots,k'\}\backslash\{i\}$, and $\langle\chi_{\mathcal{C},X_i},\sigma(D)\rangle=0$, for any $D\in\mathcal{F}$. Similarly, for any $D'\in\mathcal{F}$, there exists $\chi_{\mathcal{C},D'}\in M_\Qbb$ such that $\langle\chi_{\mathcal{C},D'},\sigma(D')\rangle=1$, $\langle\chi_{\mathcal{C},D'},u_j\rangle=0$, for any $j\in\{1,\dots,k'\}$, and $\langle\chi_{\mathcal{C},D'},\sigma(D)\rangle=0$, for any $D\in\mathcal{F}\backslash\{D'\}$.

Let $a_1,\dots,a_{k'}$ and, for any $D\in\mathcal{F}$, $a_D$ be rational numbers such that $a_1u_1+\cdots+a_{k'}u_{k'}+\sum_{D\in\mathcal{F}}a_D\sigma(D)=0$ (*). 
Applying $\chi_{\mathcal{C},X_i}$ for $i\in\{1,\dots,k'\}$ and $\chi_{\mathcal{C},D'}$ for $D'\in\mathcal{F}$ to (*), we get that $a_1=\cdots=a_{k'}=0$ and $a_D=0$ for any $D\in\mathcal{F}$. The family $(u_1,\dots,u_{k'})\cup(\sigma(D))_{D\in\mathcal{F}}$ is linearly independent (and generates $\mathcal{C}$), in particular we can complete it to get a basis $(u_1,\dots,u_k)\cup(\sigma(D))_{D\in\mathcal{F}}$ of $N_\Qbb$.

Suppose moreover that $X$ is locally factorial. Then, we can choose the elements $\chi_{\mathcal{C},X_i}$ for $i\in\{1,\dots,k'\}$, and $\chi_{\mathcal{C},D'}$ for $D'\in\mathcal{F}$, in the lattice $M$. With the same proof as above, we can prove that the family $(\chi_{\mathcal{C},X_1},\dots,\chi_{\mathcal{C},X_{k'}})\cup(\chi_{\mathcal{C},D})_{D\in\mathcal{F}})$ is linearly independent.
Also, for any element $u$ in the intersection of $N$ with the $\Qbb$-vector space generated by $(u_1,\dots,u_{k'})\cup(\sigma(D))_{D\in\mathcal{F}}$, we have $u=\langle\chi_{\mathcal{C},X_1},u\rangle u_1+\cdots+\langle\chi_{\mathcal{C},X_{k'}},u\rangle u_{k'}+\sum_{D\in\mathcal{F}}\langle\chi_{\mathcal{C},D},u\rangle$ and then $u$ is in the sublattice generated by $(\chi_{\mathcal{C},X_1},\dots,\chi_{\mathcal{C},X_{k'}})\cup(\chi_{\mathcal{C},D})_{D\in\mathcal{F}})$. We conclude, by Lemma~\ref{lem:latticebasis}, that we can find $u_{k'+1},\dots,u_k$ in $N$ such that $(u_1,\dots,u_k)\cup(\sigma(D))_{D\in\mathcal{F}}$ is a basis of $N$.
\end{proof}

A particular case of the structure theorem for finitely generated free modules over a principal ideal domain, 
gives the following result.
\begin{lem}\label{lem:latticebasis}
Let $L$ be a lattice. Let $\mathcal{E}$ be a linearly independent family of elements of $L$. Denote by $L'$ the sublattice generated by $\mathcal{E}$ and suppose that $L'$ equals the intersection of $L$ with the $\Qbb$-vector space generated by $\mathcal{E}$.

Then we can complete $\mathcal{E}$ into a basis of $L$.
\end{lem}

\begin{rem}\label{rem:desing}
A consequence of Propositions~\ref{prop:morph}, \ref{prop:toroidalsmooth} and~\ref{prop:factoriality} is that, for any spherical variety $X$, a $G$-equivariant resolution $f:\,V\longrightarrow X$ is given by erasing all colors in $\mathbb{F}_X$ and by sufficiently subdividing the cones in $\mathbb{F}_X$. (Such subdivision exists for example by \cite[Theorem~1.5]{Cox}.)

Moreover, the exceptional locus of $f$ is $G$-stable. But, still Propositions~\ref{prop:toroidalsmooth} and~\ref{prop:factoriality}, $G$-stable irreducible subvarieties of $V$ are smooth. Then, by blowing-up the irreducible component of the exceptional locus of $f$ of codimension at least~2, we obtain a $G$-equivariant resolution $\tilde{f}:\,\tilde{V}\longrightarrow X$ such that the exceptional locus of $\tilde{f}$ is of pure codimension one. Remark also that, since $\mathcal{F}_{\tilde{V}}$ is empty, the exceptional divisors of $\tilde{V}$, which are $G$-stable, are smooth.

By the local structure used in Proposition~\ref{prop:toroidalsmooth}, we can also prove that the exceptional locus of $\tilde{f}$ is a simple normal crossing divisor.
\end{rem}

\section{Gorenstein and $\Qbb$-Gorenstein varieties}

\begin{defi}
A normal variety $X$ is Gorenstein (respectively $\Qbb$-Gorenstein) if the anticanonical divisor $-K_X$ is Cartier (respectively $\Qbb$-Cartier).
\end{defi}

The criterion of these types of singularities for spherical varieties, is an easy consequence of the criterion of Cartier divisors of spherical varieties (Proposition~\ref{prop:Cartiercrit}) and the description of anticanonical divisor (Theorem~\ref{th:canonicaldiv}).

\begin{prop}
Let $G/H$ be a spherical homogeneous space. For any color $D\in\mathcal{D}$, we define $a_D$ as in Section~\ref{section:div}.
Let $(X,x)$ be a $G/H$-embedding associated to a colored fan $\mathbb{F}_X$. 
Then $X$ is Gorenstein (respectively $\Qbb$-Gorenstein) if and only if for any $(\mathcal{C},\mathcal{F})\in\mathbb{F}_X$, there exists $m_\mathcal{C}\in M$ (respectively $m_\mathcal{C}\in M_\Qbb$) such that, for any primitive element $x$ of an edge of $\mathcal{C}$ that is not generated by some $\sigma(D)$ with $D\in \mathcal{D}$, $\langle m_\mathcal{C},x\rangle=1$, and for any $D\in\mathcal{F}$, $\langle m_\mathcal{C},\sigma(D)\rangle=a_D$.
\end{prop}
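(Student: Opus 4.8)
The plan is to reduce the Gorenstein (respectively $\Qbb$-Gorenstein) criterion directly to the Cartier (respectively $\Qbb$-Cartier) criterion of Proposition~\ref{prop:Cartiercrit} applied to the anticanonical divisor given by Theorem~\ref{th:canonicaldiv}. Recall that $X$ is Gorenstein precisely when $-K_X$ is Cartier, and by Theorem~\ref{th:canonicaldiv} a representative of $-K_X$ is the $B$-stable divisor
\[
-K_X=\sum_{i=1}^m X_i+\sum_{D\in\mathcal{D}}a_D D,
\]
where the $X_i$ are the irreducible $G$-stable divisors of $X$ and the $a_D$ are the integers attached to the colors. In the notation of Proposition~\ref{prop:Cartiercrit}~(1), this is the divisor $\delta=\sum_i a_i X_i+\sum_{D\in\mathcal{D}}a_D D$ with every coefficient $a_i=1$ on the $G$-stable part. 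So the first step is simply to identify the coefficients: $a_i=1$ for each $G$-stable divisor $X_i$, and $a_D$ as defined in Section~\ref{section:div} for each color.

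Next I would feed these coefficients into the Cartier criterion. Proposition~\ref{prop:Cartiercrit}~(2) says $\delta$ is Cartier if and only if for each colored cone $(\mathcal{C},\mathcal{F})\in\mathbb{F}_X$ there is a single $\chi\in M$ with $\langle\chi,x_i\rangle=a_i$ for every edge generator $x_i\in\mathcal{C}$ coming from a $G$-stable divisor, and $\langle\chi,\sigma(D)\rangle=a_D$ for every $D\in\mathcal{F}$. Setting $m_\mathcal{C}:=\chi$ and substituting $a_i=1$ gives exactly the two stated conditions: $\langle m_\mathcal{C},x\rangle=1$ for each primitive edge generator $x$ of $\mathcal{C}$ not of the form $\sigma(D)$, and $\langle m_\mathcal{C},\sigma(D)\rangle=a_D$ for each color $D\in\mathcal{F}$. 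The $\Qbb$-Gorenstein case is identical, replacing ``$-K_X$ Cartier'' by ``$-K_X$ $\Qbb$-Cartier'' and searching for $m_\mathcal{C}\in M_\Qbb$ instead of $M$; Proposition~\ref{prop:Cartiercrit} is applied to a suitable integer multiple of $-K_X$, or one simply observes that the same rational linear-algebra argument produces $m_\mathcal{C}\in M_\Qbb$.

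The one point requiring a little care, and which I expect to be the main (minor) obstacle, is matching the indexing sets in the two statements. Proposition~\ref{prop:Cartiercrit} phrases its condition over ``any $x_i\in\mathcal{C}$,'' meaning the $G$-stable edge generators lying in $\mathcal{C}$, whereas the present statement speaks of ``any primitive element $x$ of an edge of $\mathcal{C}$ that is not generated by some $\sigma(D)$ with $D\in\mathcal{D}$.'' I would note that these describe the same set: the edges of $\mathcal{C}$ carrying no color are exactly those whose primitive generators are the $x_i$ associated to $G$-stable divisors, as recalled at the start of Section~\ref{section:div}. A second bookkeeping remark is that the condition is stated using $D\in\mathcal{D}$ in the excerpt's parenthetical but effectively ranges over $\mathcal{F}$ for the color equalities; since $\sigma(D)\in\mathcal{C}$ forces $D$ to contribute to the colored cone, the quantifier over $\mathcal{F}$ is the correct one, and I would make this explicit. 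Once these identifications are in place, the equivalence is an immediate translation, so no genuine computation remains beyond citing Proposition~\ref{prop:Cartiercrit} and Theorem~\ref{th:canonicaldiv}.
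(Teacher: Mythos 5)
Your argument is correct and is exactly the route the paper takes: the paper gives no separate proof of this proposition, stating only that it is an immediate consequence of the Cartier criterion (Proposition~\ref{prop:Cartiercrit}) applied to the anticanonical representative of Theorem~\ref{th:canonicaldiv}, which is precisely what you carry out. Your bookkeeping remarks on identifying the colorless edge generators with the $x_i$ and on the $\mathcal{D}$ versus $\mathcal{F}$ quantifier are accurate and only make explicit what the paper leaves implicit.
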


\section{(log) terminal and canonical singularities}

\begin{defi}
Let $X$ be a normal $\Qbb$-Gorenstein variety. Let $f:\,V\longrightarrow X$ be a resolution of $X$ (i.e. $f$ is birational and $V$ is smooth).
Then $K_V-f^*(K_X)=\sum_{i\in\mathcal{I}}a_iE_i$ where $\{E_i\,\mid\,i\in\mathcal{I}\}$ is the set of exceptional divisors of $f$.

We say that $X$ has 
\begin{itemize}[label=$\bullet$]
\item canonical singularities if, for any $i\in\mathcal{I}$, $a_i\geq 0$;
\item terminal singularities if, for any $i\in\mathcal{I}$, $a_i>0$.
\end{itemize}
\end{defi}

Note that the definition does not depend on the choice of the resolution.
Moreover, if $X$ is spherical, recall that, by Remark~\ref{rem:desing}, we can construct a resolution by deleting the colors of $X$ and by taking subdivision of the cones of $\mathbb{F}_X$. Then, still with the criterion of Cartier divisors on spherical varieties, we get the following characterizations of canonical and terminal singularities.

\begin{prop}\label{prop:canterm}
Let $G/H$ be a spherical homogeneous space. Let $(X,x)$ be a $\Qbb$-Gorenstein $G/H$-embedding  associated to a colored fan $\mathbb{F}_X$. 

For any colored cone $(\mathcal{C},\mathcal{F})$ of $\mathbb{F}_X$, denote by $h_\mathcal{C}$ the linear function such that for any $D\in\mathcal{F}$, $h_\mathcal{C}(\sigma(D))=a_D$ and, for any primitive element $u$ of an edge of $\mathcal{C}$ that is not generated by some $\sigma(D)$ with $D\in\mathcal{F}$, $h_\mathcal{C}(u)=1$.
\begin{itemize}[label=$\bullet$]
\item $X$ has canonical singularities if and only if for any colored cone $(\mathcal{C},\mathcal{F})$ of $\mathbb{F}_X$, for any $x\in\mathcal{C}\cap N\cap\mathcal{V}$, $h_\mathcal{C}(x)\geq 1$.
\item $X$ has terminal singularities if and only if for any colored cone $(\mathcal{C},\mathcal{F})$ of $\mathbb{F}_X$, for any $x\in\mathcal{C}\cap N\cap\mathcal{V}$, $h_\mathcal{C}(x)\leq 1$ implies that $x$ is the primitive element of an edge of $\mathcal{C}$ that is not generated by some $\sigma(D)$ with $D\in\mathcal{F}$ (i.e. $x=x_i$ with our notation).
\end{itemize}
\end{prop}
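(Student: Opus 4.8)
The plan is to reduce the statement to a discrepancy computation on the $G$-equivariant resolution produced in Remark~\ref{rem:desing}, and then to read off the discrepancies directly from the linear functions $h_\mathcal{C}$. First I would fix such a resolution $f\colon V\to X$, obtained by erasing the colors of $\mathbb{F}_X$ and sufficiently subdividing its cones, so that $V$ is smooth, $\mathcal{F}_V$ is empty, and every ray of $\mathbb{F}_V$ lies in $\mathcal{V}$. Since $f$ is an isomorphism over the open orbit $G/H$, each color $\bar D^V$ ($D\in\mathcal{D}$) maps onto $\bar D^X$ and is therefore \emph{not} $f$-exceptional. Hence the exceptional divisors of $f$ are exactly the $G$-stable prime divisors $E_\rho$ of $V$ whose ray $\rho$, with primitive generator $u_\rho\in N\cap\mathcal{V}$, is not one of the original $G$-stable rays $x_i$ of $\mathbb{F}_X$.

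Next I would compute the discrepancies. By Theorem~\ref{th:canonicaldiv} applied to $V$, one has $-K_V=\sum_E E+\sum_{D\in\mathcal{D}}a_D\bar D^V$ with $E$ ranging over the $G$-stable divisors of $V$; in particular every such $E$ has coefficient $1$. Since $X$ is $\Qbb$-Gorenstein, the support function of $-K_X$ restricts on each maximal cone $\mathcal{C}$ to the linear function $\chi_{\mathcal{C},-K_X}$ furnished by the ($\Qbb$-)Cartier criterion (Proposition~\ref{prop:Cartiercrit}), and its defining relations $\langle\chi_{\mathcal{C},-K_X},x_i\rangle=1$ and $\langle\chi_{\mathcal{C},-K_X},\sigma(D)\rangle=a_D$ identify it with $h_\mathcal{C}$. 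Pulling back, the coefficient of $E_\rho$ in $f^*(-K_X)$ is $h_\mathcal{C}(u_\rho)$, where $\mathcal{C}$ is the cone of $\mathbb{F}_X$ containing $u_\rho$. The color terms in $K_V$ and in $f^*K_X$ coincide (again because $f$ is an isomorphism over $G/H$) and cancel in $K_V-f^*K_X$, while the $G$-stable coefficient $1$ of $K_V$ contributes a $-1$. For an exceptional ray this yields the discrepancy
\[
a_\rho=h_\mathcal{C}(u_\rho)-1,
\]
which equals $0$ when $u_\rho=x_i$, as it must for a non-exceptional divisor.

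Finally I would translate the definitions: $X$ has canonical (resp. terminal) singularities exactly when $a_\rho\geq 0$ (resp. $a_\rho>0$) for every exceptional divisor, i.e. $h_\mathcal{C}(u_\rho)\geq 1$ (resp. $>1$) for every exceptional ray. Because discrepancies are independent of the chosen resolution and any primitive $u\in\mathcal{C}\cap N\cap\mathcal{V}$ with $u\neq x_i$ can be made a ray of some subdivision, the inequality must hold for all such $u$, and passing to arbitrary (non-primitive) lattice points of $\mathcal{C}\cap N\cap\mathcal{V}$ is equivalent by linearity; this gives the canonical criterion and the terminal one in its contrapositive form ``$h_\mathcal{C}(x)\leq 1\Rightarrow x=x_i$''. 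The step requiring the most care is the bookkeeping of colors against $G$-stable divisors: one must distinguish the non-exceptional strict transform $\bar D^V$ of a color from the $G$-stable divisor sitting on the ray through $\sigma(D)$ when $\sigma(D)\in\mathcal{V}$, and confirm that the $a_D$ genuinely cancel, so that the clean identity $a_\rho=h_\mathcal{C}(u_\rho)-1$ holds. The one potentially obstructing lattice point in the terminal case is $u=\sigma(D)\in\mathcal{V}$ for a color $D\in\mathcal{F}$, where $h_\mathcal{C}(u)=a_D$; here Corollary~\ref{cor:canonicaldiv} guarantees $a_D\geq 2>1$, so such points never violate terminality, in agreement with the statement.
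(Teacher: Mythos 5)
Your proposal is correct and follows essentially the same route as the paper: it derives the discrepancy formula $a_i=h_{\mathcal{C}}(\sigma(E_i))-1$ by pulling back the ($\Qbb$-Cartier) anticanonical support function along a resolution obtained by erasing colors and subdividing (the paper isolates this pullback computation in Lemma~\ref{lem:pullback}), and then uses the independence of discrepancies from the resolution together with the fact that any primitive lattice point of $\mathcal{C}\cap N\cap\mathcal{V}$ can be realized as an exceptional ray. Your additional bookkeeping of colors versus $G$-stable divisors and the appeal to Corollary~\ref{cor:canonicaldiv} are consistent with, and slightly more explicit than, the paper's argument.
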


We begin by proving the following result.

\begin{lem}\label{lem:pullback}
Let $G/H$ be a spherical homogeneous space. Let $(X,x)$ and $(V,v)$ be two $G/H$-embeddings respectively associated to colored fans $\mathbb{F}_X$ and $\mathbb{F}_V$. Suppose that there exists a $G$-equivariant dominant morphism $f:\,V\longrightarrow X$. And let $\delta$ be a Cartier divisor of $X$.

Recall that $\delta$ is associated to a piecewise linear function $h_\delta$ on $\mathbb{F}_X$. Then, 
for any colored cone $(\mathcal{C},\mathcal{F})$ of $\mathbb{F}_X$, we denote by $h_{\mathcal{C},\delta}$ a linear function associated to the restriction of $h_\delta$ on $(\mathcal{C},\mathcal{F})$.
Moreover, this linear function can be chosen to be identified to an element $\chi_{\mathcal{C},\delta}$ of $M$.

Let $E$ be an exceptional divisor of $f$. Since $f$ is birational and $G$-equivariant, it is $G$-stable.

Then, for any $(\mathcal{C},\mathcal{F})$ in $\mathbb{F}_X$ such that $x_{E}\in\mathcal{C}$, the coefficient attached to $E$ in $f^*(\delta)$ is $h_{\mathcal{C},\delta}(\sigma(E))$.
\end{lem}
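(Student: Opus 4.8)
The plan is to reduce the computation of the coefficient of $E$ in $f^*(\delta)$ to a purely local question at the center of the valuation $\nu_E=\operatorname{ord}_E$ on $X$, where $\delta$ becomes principal and is given by the $B$-eigenfunction $f_{\chi_{\mathcal{C},\delta}}$. First I would fix a colored cone $(\mathcal{C},\mathcal{F})$ of $\mathbb{F}_X$ with $x_E\in\mathcal{C}$ and set $\chi:=\chi_{\mathcal{C},\delta}\in M$, so that $h_{\mathcal{C},\delta}(\sigma(E))=\langle\chi,\sigma(E)\rangle$; note that, since $E$ is $G$-stable, $\nu_E$ restricts to a $G$-invariant valuation of $\Cbb(G/H)$ whose image is $\sigma(\nu_E)=\sigma(E)=x_E\in\mathcal{C}$.

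The first key step is to locate the center of $\nu_E$ on $X$ via the orbit--cone dictionary for spherical embeddings: the center is the closure $\overline{Y_E}$ of the $G$-orbit $Y_E$ whose colored cone $\mathcal{C}_{Y_E}$ contains $x_E$ in its relative interior. Because $x_E\in\mathcal{C}$, the cone $\mathcal{C}_{Y_E}$ is a face of $\mathcal{C}$, hence $Y_E$ lies in the $G$-stable open set $X_{\mathcal{C}}:=X_Y$ (with $Y$ the orbit attached to $\mathcal{C}$). In particular the generic point of $E$ maps to the generic point of $\overline{Y_E}$, which lies in $X_{\mathcal{C}}$.

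Next I would show that $f_\chi$ is a local equation for $\delta$ near the generic point of $\overline{Y_E}$. Both $\delta=\sum a_iX_i+\sum a_DD$ and $\operatorname{div}(f_\chi)=\sum_i\langle\chi,x_i\rangle X_i+\sum_D\langle\chi,\sigma(D)\rangle D$ are $B$-stable divisors. The $B$-stable prime divisors of $X$ passing through $\overline{Y_E}$ are exactly the $X_i$ with $x_i$ an edge of $\mathcal{C}_{Y_E}$ and the colors $D$ with $\sigma(D)\in\mathcal{C}_{Y_E}$; as $\mathcal{C}_{Y_E}$ is a face of $\mathcal{C}$, these satisfy $x_i\in\mathcal{C}$ and $D\in\mathcal{F}$, so the Cartier relations of Proposition~\ref{prop:Cartiercrit} give $\langle\chi,x_i\rangle=a_i$ and $\langle\chi,\sigma(D)\rangle=a_D$ for all of them. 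Thus $\delta-\operatorname{div}(f_\chi)$, being $B$-stable, has zero coefficient along every prime divisor through $\overline{Y_E}$ and so vanishes near its generic point, which is precisely the assertion that $f_\chi$ is a local equation. Finally, since $f$ is birational I identify $\Cbb(V)=\Cbb(X)=\Cbb(G/H)$, so over $f^{-1}(X_{\mathcal{C}})$ (which contains the generic point of $E$) the pullback $f^*(\delta)$ equals $\operatorname{div}(f_\chi)$; its coefficient along $E$ is therefore $\nu_E(f_\chi)=\sigma(\nu_E)(\chi)=\langle\chi,\sigma(E)\rangle=h_{\mathcal{C},\delta}(\sigma(E))$.

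The hard part will be the second step: correctly identifying the center of the $G$-invariant valuation $\nu_E$ in terms of the colored fan and verifying that it lands inside the open set $X_{\mathcal{C}}$ on which $\delta$ is principal. Once this orbit--cone bookkeeping is settled, the fact that both $\delta$ and $\operatorname{div}(f_\chi)$ are $B$-stable makes the local-equation claim a routine coefficient comparison, and the pullback computation is then immediate from $\sigma(\nu_E)(\chi)=\langle\chi,\sigma(E)\rangle$.
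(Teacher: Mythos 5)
Your proof is correct and takes essentially the same route as the paper's: both rest on the fact that, on the chart attached to $(\mathcal{C},\mathcal{F})$, the $B$-stable Cartier divisor $\delta$ is principal with $B$-eigenfunction equation $f_{\chi_{\mathcal{C},\delta}}$, identify $\Cbb(V)$ with $\Cbb(X)$ $G$-equivariantly, and read off the coefficient of $E$ as $\nu_E(f_{\chi_{\mathcal{C},\delta}})=\langle\chi_{\mathcal{C},\delta},\sigma(E)\rangle$. The only difference is bookkeeping for the localization step: you work at the center $\overline{Y_E}$ of $\nu_E$ on $X$ and check the local-equation claim by comparing coefficients along the $B$-stable divisors through it, whereas the paper observes that $f^{-1}(\mathcal{U}_\mathcal{C})$ is covered by the charts of $V$ whose colored cones are contained in $(\mathcal{C},\mathcal{F})$.
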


\begin{proof}[Proof of Lemma~\ref{lem:pullback}]
To describe $f^*(\delta)$ we need to look deeper at the Cartier criterion.

Since $\delta$ is a $B$-stable Cartier divisor of $X$,  $\mathcal{O}(\delta)$ is given by $(\mathcal{U}_\mathcal{C},f_{-\chi_{\mathcal{C},\delta}})$, where $(\mathcal{C},\mathcal{F})$ run the set of maximal colored cones in $\mathbb{F}_X$, where $\mathcal{U}_\mathcal{C}$ is the open set of $X$ associated to the colored cone $(\mathcal{C},\mathcal{F})$ and $f_{ -\chi_{\mathcal{C},\delta}}$ is a non-zero rational function on $X$ associated to the weight $-\chi_{\mathcal{C},\delta}$ (unique up to a scalar). Moreover, for any maximal colored cones $(\mathcal{C},\mathcal{F})$ in $\mathbb{F}_X$, $\rm{div}(f_{ -\chi_{\mathcal{C},\delta}})$ equals $-\delta$ on the open set $\mathcal{U}_\mathcal{C}$. Then $f^*\mathcal{O}(\delta)$ is given by $(f^{-1}(\mathcal{U}_\mathcal{C}),f_{-\chi_{\mathcal{C},\delta}}\circ f)$.

Now, we remark that the map from $\Cbb(X)$ to $\Cbb(V)$ that sends $g$ to $g\circ f$ is a $G$-equivariant isomorphism, so that $f_{-\chi_{\mathcal{C},\delta}}\circ f$ is a rational function on $V$ associated to the weight $-\chi_{\mathcal{C},\delta}$. And we also notice that $f^{-1}(\mathcal{U}_\mathcal{C})$ is the union of the open set of $V$ associated to the colored cones $(\mathcal{C}',\mathcal{F}')$ of $\mathbb{F}_V$ such that $\mathcal{C}'\subset\mathcal{C}$ (and $\mathcal{F}'\subset\mathcal{F}$). Then, for any irreducible $G$-stable divisor $E$ of $V$, the coefficient attached to $E$ in $f^*(\delta)$ is $\langle \chi_{\mathcal{C},\delta},\sigma(E)\rangle$ for any maximal $(\mathcal{C},\mathcal{F})\in\mathbb{F}_X$ such that $\sigma(E)\in\mathcal{C}$. 

Hence, for any $(\mathcal{C},\mathcal{F})\in\mathbb{F}_X$ such that $\sigma(E)\in\mathcal{C}$, the coefficient attached to $E$ in $f^*(\delta)$ is $h_{\mathcal{C},\delta}(\sigma(E))$.
\end{proof}

\begin{proof}[Proof of Proposition~\ref{prop:canterm}]

Let $f:\,V\longrightarrow X$ be a $G$-equivariant resolution of $X$. We can apply Lemma~\ref{lem:pullback}, the Cartier divisor $\delta=-kK_X$ for a large enough positive integer $k$. In that case, for any $(\mathcal{C},\mathcal{F})\in\mathbb{F}_X$, $h_{\mathcal{C},\delta}=kh_\mathcal{C}$. Then, for any $i\in\mathcal{I}$ and for any $(\mathcal{C},\mathcal{F})\in\mathbb{F}_X$ such that $\sigma(E_i)\in\mathcal{C}$, the coefficient attached to $E_i$ in $k(K_V-f^*(K_X))$ is $kh_{\mathcal{C}}(x_{E})-k$, so that the coefficient $a_i$ attached to $E_i$ in $K_V-f^*(K_X)$ is $h_{\mathcal{C}}(\sigma(E_i))-1$.

\begin{itemize}[label=$\bullet$]
\item Suppose that $X$ has canonical singularities. Let $(\mathcal{C},\mathcal{F})\in\mathbb{F}_X$ and $x\in\mathcal{C}\cap N\cap\mathcal{V}$. We can suppose that $x$ is primitive in $N$. By Remark~\ref{rem:desing}, there exists a resolution $V$ of $X$ such that $x$ is the primitive element of an edge without color of the colored fan of $V$. Then $x$ is the image by $\sigma$ of an exceptional ($G$-stable and irreducible) divisor $E_i$ of $V$. Since $a_i\geq 0$ and $a_i=h_\mathcal{C}(x)-1$, we get $h_\mathcal{C}(x)\geq 1$.

The "if" part proof works with the same arguments.

\item The proof is almost the same as above with "$>$" instead of "$\geq$".
\end{itemize}
\end{proof}

\begin{defi}
Let $X$ be a normal variety and let $D$ be an effective $\Qbb$-divisor such that $K_X+D$ is $\Qbb$-Cartier.
The pair $(X,D)$ is said to be klt (Kawamata log terminal) if for any resolution  $f:\,V\longrightarrow X$ of $X$ such that $K_V=f^*(K_X+D)+\sum_{i\in \mathcal{I}}a_iE_i$, we have $a_i>-1$ for any $i\in \mathcal{I}$.

We say that $X$ has log terminal singularities if $X$ is $\Qbb$-Gorenstein and $(X,0)$ is klt.
\end{defi}

\begin{rem}\label{rem:klt}
\begin{enumerate}
\item In fact, it is enough to check the above property for one log-resolution to say that a pair $(X,D)$ is klt.
\item The condition "$a_i>-1$ for any $i\in \mathcal{I}$" can be replaced by: $\lfloor D\rfloor=0$ and for any $i\in \mathcal{I}$ such that $E_i$ is exceptional for $f$, $a_i>-1$.
\end{enumerate}
\end{rem}

Still with the criterion of Cartier divisors of spherical varieties (Proposition~\ref{prop:Cartiercrit}), we get the following result.

\begin{prop}\cite{AlexeevBrion}
Let $X$ be a spherical variety. Then $X$ has log terminal singularities.
\end{prop}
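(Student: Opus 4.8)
The plan is to reduce everything to the discrepancy computation already carried out for Proposition~\ref{prop:canterm}, and then to observe that the relevant linear function is strictly positive away from the origin. I work with a $\Qbb$-Gorenstein $G/H$-embedding $(X,x)$, which is the setting in which the notion of log terminal singularities is defined (so that $K_X$ is $\Qbb$-Cartier and $f^*(K_X)$ makes sense). Fix a $G$-equivariant resolution $f\colon V\longrightarrow X$ obtained as in Remark~\ref{rem:desing} by erasing the colors of $\mathbb{F}_X$ and subdividing its cones, so that $V$ is smooth and every exceptional divisor $E_i$ is $G$-stable and irreducible.

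First I would record the discrepancies. Exactly as in the proof of Proposition~\ref{prop:canterm}, applying Lemma~\ref{lem:pullback} to $\delta=-kK_X$ for $k$ large enough (so that $-kK_X$ is Cartier) and dividing by $k$, each exceptional divisor $E_i$ whose image $\sigma(E_i)$ lies in a colored cone $(\mathcal{C},\mathcal{F})\in\mathbb{F}_X$ has discrepancy
\[
a_i \;=\; h_{\mathcal{C}}\bigl(\sigma(E_i)\bigr)-1 ,
\]
where $h_{\mathcal{C}}$ is the linear function determined by $h_{\mathcal{C}}(\sigma(D))=a_D$ for $D\in\mathcal{F}$ and $h_{\mathcal{C}}(x_j)=1$ on the $G$-stable edges $x_j$ of $\mathcal{C}$. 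Since $\sigma(E_i)$ is a nonzero element of $\mathcal{C}\cap N\cap\mathcal{V}$, it suffices to prove that $h_{\mathcal{C}}(u)>0$ for every $u\in\mathcal{C}\setminus\{0\}$; this yields $a_i>-1$ and hence that $(X,0)$ is klt.

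The heart of the argument is this positivity, and it is exactly where the anticanonical divisor enters. By Theorem~\ref{th:canonicaldiv} every color satisfies $a_D\geq 1$, while the $G$-stable edges carry the value $1$; thus $h_{\mathcal{C}}$ is strictly positive on the whole generating set $\{x_j\}\cup\{\sigma(D):D\in\mathcal{F}\}$ of $\mathcal{C}$. Writing any $u\in\mathcal{C}$ as a nonnegative combination $u=\sum_j\lambda_j x_j+\sum_{D\in\mathcal{F}}\mu_D\,\sigma(D)$ and using that $h_{\mathcal{C}}=\langle m_{\mathcal{C}},-\rangle$ is a genuine linear function (this is where $\Qbb$-Gorensteinness is used) gives
\[
h_{\mathcal{C}}(u)=\sum_j\lambda_j+\sum_{D\in\mathcal{F}}\mu_D\,a_D\;\geq\;0 ,
\]
and for $u\neq 0$ at least one coefficient is positive, so $h_{\mathcal{C}}(u)>0$. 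The discrepancy bound $a_i>-1$ follows at once, whence $X$ is log terminal.

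I expect the genuine obstacle to be conceptual rather than computational. The two points that require care are, first, that $K_X$ must be $\Qbb$-Cartier for the discrepancies to be defined at all (this is why the statement is understood for $\Qbb$-Gorenstein $X$), and second, that the decolored, subdivided resolution of Remark~\ref{rem:desing} genuinely realizes every primitive point of $\mathcal{C}\cap N\cap\mathcal{V}$ as the image of some exceptional $G$-stable divisor, so that no discrepancy is overlooked. Once these are in place, the inequality $a_D\geq 1$ makes the positivity of $h_{\mathcal{C}}$ immediate, and this also clarifies why log terminality is the weakest of the three conditions: it only requires $h_{\mathcal{C}}>0$ on $\mathcal{C}\cap N\cap\mathcal{V}$, whereas canonicity demands $h_{\mathcal{C}}\geq 1$ there, and both hold automatically for the values $1$ and $a_D$ that are bounded below by $1$.
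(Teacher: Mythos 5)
Your proposal is correct and follows essentially the same route as the paper: compute the discrepancies $a_i=h_{\mathcal{C}}(\sigma(E_i))-1$ via the pullback computation from the proof of Proposition~\ref{prop:canterm}, then deduce $a_i>-1$ from the strict positivity of $h_{\mathcal{C}}$ on $\mathcal{C}\setminus\{0\}$, which follows from the fact that all anticanonical coefficients in Theorem~\ref{th:canonicaldiv} are at least $1$. Your explicit justification of that positivity (writing $u$ as a nonnegative combination of the generators) merely spells out what the paper leaves as an observation.
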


In fact, in \cite{AlexeevBrion}, V.~Alexeev and M.~Brion proved that, if $X$ is a spherical $G$-variety and $D$ be an effective $\Qbb$-divisor of $X$ such that $D+K_X$ is $\Qbb$-Cartier, $\lfloor D\rfloor=0$ and $D=D_G+D_B$ where $D_G$ is $G$-stable and $D_B$ is stable under the action of a Borel subgroup $B$ of $G$, then $(X,D_G+D_B')$ has klt singularities for general $D_B'$ in $|D_B|$.

We can give a short proof of the proposition.
\begin{proof}
Let $f:\,V\longrightarrow X$ be $G$-equivariant log-resolution of $(X,0)$, ie such that the exceptional locus of $f$ is a simple normal crossing divisor (see Remark~\ref{rem:desing}). With the same arguments as in the first part of the proof of Proposition~\ref{prop:canterm}, we can prove that, for any exceptional divisor $E_i$ of $f$, we have $a_i=h_{\mathcal{C},\delta}(\sigma(E_i))-1$ where $\sigma(E_i)\in\mathcal{C}$ and $\delta=-K_X$.

By Proposition~\ref{th:canonicaldiv}, we notice that $h_{\mathcal{C},\delta}(x)>0$ for any $x\in\mathcal{C}\backslash\{0\}$. In particular $h_{\mathcal{C},\delta}(\sigma(E_i))>0$ and then $a_i>-1$.
\end{proof}

To complete what we know on klt singularities of spherical varieties, we note that the author prove in \cite{KltHoro} the following result, by using Bott-Samelson resolutions of flag varieties.

\begin{teo}
Let $X$ be a horospherical variety and let $D$ be an effective $\Qbb$-divisor such that $K_X+D$ is $\Qbb$-Cartier. The pair $(X,D)$ is klt if and only if $\lfloor D\rfloor=0$.
\end{teo}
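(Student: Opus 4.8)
The statement splits into two implications, and only the ``if'' direction uses that $X$ is horospherical. The ``only if'' direction is formal and requires no hypothesis on $X$: if some prime component $D_j$ of $D$ has coefficient $d_j\geq 1$, then on any log resolution its strict transform is a divisor with discrepancy $-d_j\leq -1$, so by Remark~\ref{rem:klt} the pair $(X,D)$ cannot be klt. Hence klt forces $\lfloor D\rfloor=0$.

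For the ``if'' direction assume $\lfloor D\rfloor=0$. By Remark~\ref{rem:klt} it is enough to exhibit one log resolution of $(X,D)$ on which every \emph{exceptional} discrepancy is $>-1$. The natural first step is the $G$-equivariant resolution $f\colon V\to X$ of Remark~\ref{rem:desing}, obtained by erasing the colors of $\mathbb{F}_X$ and subdividing; here $V$ is a toroidal horospherical variety, that is a toric fibration over the flag variety $G/P$, and all its exceptional divisors are $G$-stable. For such a $G$-stable exceptional $E_i$ one computes, exactly as in the proof of Proposition~\ref{prop:canterm}, that its discrepancy with respect to the pair is
$$a_i=h_{\mathcal{C}}(\sigma(E_i))-1-\operatorname{ord}_{E_i}(f^*D),$$
where $h_{\mathcal{C}}$ is the piecewise linear function attached to $-K_X$. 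Since $G/H$ is horospherical it has no spherical roots, so by Theorem~\ref{th:canonicaldiv} the function $h_{\mathcal{C}}$ is positive on $\mathcal{C}\setminus\{0\}$ and the term $h_{\mathcal{C}}(\sigma(E_i))-1$ is already $>-1$; the whole difficulty is to control the correction $\operatorname{ord}_{E_i}(f^*D)$ produced by the part of $D$ that is not $G$-stable.

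To do this I would write $D=D_G+D^{\mathrm{hor}}$, where $D_G$ is the $G$-stable part (supported on the $X_i$, hence with order along $E_i$ read off directly from $\mathbb{F}_X$) and $D^{\mathrm{hor}}$ dominates $G/P$. The resolution $f$ is in general \emph{not} yet a log resolution of the pair, because the horizontal divisor $D^{\mathrm{hor}}$ must still be resolved, and the extra exceptional divisors this creates are no longer $G$-stable; their discrepancies are the crux. Here I would use the fibration $V\to G/P$ together with a Bott--Samelson resolution $\psi\colon Z\to G/P$: pulling the whole picture back to the iterated $\Pbb^1$-bundle $Z$, the horizontal components of $D$ become divisors whose vanishing orders along the new exceptional divisors are computable step by step along the tower of $\Pbb^1$-bundles, while the discrepancies of $\psi$ are read off from the anticanonical class of $G/P$, whose Schubert-divisor coefficients are precisely the integers $a_D\geq 2$ of Theorem~\ref{th:canonicaldiv}. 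Combining the toric discrepancies in the fibre direction with the Bott--Samelson discrepancies in the base direction produces a genuine log resolution of $(X,D)$, and the positivity of $h_{\mathcal{C}}$ together with $\lfloor D\rfloor=0$ should force every exceptional discrepancy to be $>-1$.

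The main obstacle is exactly the horizontal part $D^{\mathrm{hor}}$: unlike the $G$-stable divisors or the colors, its components have no image $\sigma(D_j)$ in $N_\Qbb$, so their orders along the exceptional divisors are not furnished by the combinatorics of the colored fan. This is where the horospherical hypothesis is indispensable, since via the torus fibration $G/H\to G/P$ it transfers the question to the flag variety $G/P$, where the Bott--Samelson tower makes the relevant vanishing orders and discrepancies explicit. For a general spherical homogeneous space the valuation cone $\mathcal{V}$ is a proper subcone of $N_\Qbb$ and this reduction is unavailable, which is why Alexeev and Brion \cite{AlexeevBrion} obtain only the weaker statement in which the non-$G$-stable part of $D$ must be chosen general in its linear system.
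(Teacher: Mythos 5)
First, note that the survey does not actually prove this theorem: it is imported from \cite{KltHoro}, and the only information the text gives about its proof is that it uses Bott--Samelson resolutions of flag varieties. You have correctly guessed that tool, and your treatment of the ``only if'' direction and of the $G$-stable exceptional divisors (via the toroidal resolution of Remark~\ref{rem:desing} and the positivity of $h_{\mathcal C}$ on $\mathcal C\setminus\{0\}$, exactly as in Proposition~\ref{prop:canterm}) is sound. But your argument stops precisely where the theorem begins: you never establish that the multiplicity of $f^*D^{\mathrm{hor}}$ along each exceptional divisor created when resolving the horizontal part is strictly less than one plus the discrepancy of that divisor with respect to $K_X$ alone, writing only that the combination of toric and Bott--Samelson discrepancies ``should force'' every discrepancy to exceed $-1$. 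That inequality is the entire content of the theorem, and it does not follow from $\lfloor D\rfloor=0$: an effective divisor with coefficients strictly less than $1$ can have components that are arbitrarily singular, so its pullback can acquire large multiplicity along a fixed exceptional divisor.

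In fact, for an arbitrary horizontal part the statement is false, which shows that no completion of your argument as set up can succeed. Take $X=\Pbb^2={\rm SL}_3/P$, which is horospherical, and $D=\tfrac{9}{10}C$ with $C$ a cuspidal cubic: then $K_X+D$ is $\Qbb$-Cartier and $\lfloor D\rfloor=0$, but the log canonical threshold of a cusp is $\tfrac{5}{6}<\tfrac{9}{10}$, so $(X,D)$ is not klt. The theorem of \cite{KltHoro} assumes in addition that $D$ is stable under a Borel subgroup $B$ (a hypothesis the statement above omits), and that assumption is exactly what your sketch lacks: when $D^{\mathrm{hor}}$ is supported on colors, i.e.\ on closures of $B$-stable divisors of the open orbit, its multiplicities along the exceptional divisors of the Bott--Samelson tower are computable and can be compared with the coefficients $a_D$ of Theorem~\ref{th:canonicaldiv}, whereas for an arbitrary $D^{\mathrm{hor}}$ no such control exists --- this is also why Alexeev and Brion \cite{AlexeevBrion} must take $D_B'$ general in its linear system. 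A correct proof must therefore invoke the $B$-stability of $D$ at the point where you write $D=D_G+D^{\mathrm{hor}}$, and then carry out the multiplicity estimate along the Bott--Samelson exceptional divisors explicitly rather than asserting it.
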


The author does not know if this result could be generalized to spherical varieties.

\section{Conclusion and example}

To complete the natural connections between these singularities we can also state the following result.

\begin{prop}\label{prop:implication}
Any locally factorial spherical variety has terminal singularities.

And any Gorenstein spherical variety has canonical singularities.
\end{prop}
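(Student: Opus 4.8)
The plan is to reduce both implications to the combinatorial criteria already established: Proposition~\ref{prop:factoriality} for local factoriality, Proposition~\ref{prop:canterm} for terminal and canonical singularities, and the description of $-K_X$ in Theorem~\ref{th:canonicaldiv} together with Corollary~\ref{cor:canonicaldiv}. First I would note that both hypotheses make $X$ at least $\Qbb$-Gorenstein: a locally factorial variety has every Weil divisor Cartier, in particular $-K_X$; and a Gorenstein variety has $-K_X$ Cartier by definition. Thus in each case the criterion of Proposition~\ref{prop:canterm} applies, with $h_\mathcal{C}$ the linear function sending each primitive edge element $x_i$ to $1$ and each $\sigma(D)$, $D\in\mathcal{F}$, to $a_D$.

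For the first implication, I would fix a colored cone $(\mathcal{C},\mathcal{F})\in\mathbb{F}_X$ and use Proposition~\ref{prop:factoriality} to obtain a $\Zbb$-basis $(u_1,\dots,u_k)\cup(\sigma(D))_{D\in\mathcal{F}}$ of $N$ with $\mathcal{C}$ generated by $(u_1,\dots,u_{k'})\cup(\sigma(D))_{D\in\mathcal{F}}$, the $u_1,\dots,u_{k'}$ being exactly the $x_i$ contained in $\mathcal{C}$. The point is that any $x\in\mathcal{C}\cap N$ then expands as $x=\sum_{i=1}^{k'}b_iu_i+\sum_{D\in\mathcal{F}}c_D\sigma(D)$ with coefficients that are simultaneously nonnegative (since $x\in\mathcal{C}$) and integral (since these vectors are part of a $\Zbb$-basis of $N$). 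Hence $h_\mathcal{C}(x)=\sum_i b_i+\sum_D c_Da_D$ is a sum of nonnegative integers in which every $a_D$ is a positive integer by Theorem~\ref{th:canonicaldiv}. For a nonzero $x$ with $h_\mathcal{C}(x)\leq 1$, exactly one coefficient equals $1$ and the rest vanish; if that coefficient were some $c_D$ it would force $a_D=1$ and $x=\sigma(D)$, which Corollary~\ref{cor:canonicaldiv} forbids for $x\in\mathcal{V}$ (a non-$G$-stable color with $a_D=1$ has $\sigma(D)\notin\mathcal{V}$). So the surviving coefficient is some $b_i$, giving $x=u_i=x_i$, exactly the terminality condition of Proposition~\ref{prop:canterm}.

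For the second implication, I would use that $X$ Gorenstein means $-K_X$ is Cartier, so the integral Gorenstein criterion of Section~4 furnishes, for each $(\mathcal{C},\mathcal{F})$, an element $m_\mathcal{C}\in M$ with $\langle m_\mathcal{C},x_i\rangle=1$ and $\langle m_\mathcal{C},\sigma(D)\rangle=a_D$; thus $h_\mathcal{C}=\langle m_\mathcal{C},\cdot\rangle$ on $\mathcal{C}$. The key consequence is that for every $x\in\mathcal{C}\cap N\cap\mathcal{V}$ the value $h_\mathcal{C}(x)=\langle m_\mathcal{C},x\rangle$ is an integer. Separately, writing $x$ as a nonnegative combination of the generators of $\mathcal{C}$ and using that $h_\mathcal{C}$ takes the strictly positive values $1$ and $a_D\geq 1$ there (Theorem~\ref{th:canonicaldiv}), I would conclude $h_\mathcal{C}(x)>0$ for $x\neq 0$. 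A strictly positive integer is at least $1$, so $h_\mathcal{C}(x)\geq 1$, which is precisely the canonicity condition of Proposition~\ref{prop:canterm}.

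I expect the only delicate step to be the color term in the terminal argument. A color of type (a) or (2a) has $a_D=1$, so $h_\mathcal{C}$ can be as small as $1$ at the lattice point $\sigma(D)$, and one must exclude this point from the terminality test. The essential input that does so is Corollary~\ref{cor:canonicaldiv}, which places the image of such a color strictly outside the valuation cone $\mathcal{V}$ --- exactly the region over which Proposition~\ref{prop:canterm} is tested --- so the point never arises. The remainder is the integrality bookkeeping described above.
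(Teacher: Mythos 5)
Your proof is correct and follows essentially the same route as the paper's: both implications reduce to the combinatorial criterion of Proposition~\ref{prop:canterm}, with the terminal case resting on the lattice basis supplied by Proposition~\ref{prop:factoriality} together with Corollary~\ref{cor:canonicaldiv} to rule out a color of type (a) or (2a) lying in $\mathcal{V}$, and the canonical case resting on integrality of $h_\mathcal{C}$ on $N$. The only cosmetic difference is that the paper phrases the computation in terms of the values $h_\mathcal{C}(\sigma(E_i))$ at exceptional divisors of a resolution, whereas you verify the inequality at arbitrary points of $\mathcal{C}\cap N\cap\mathcal{V}$; these are equivalent.
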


\begin{rem}
By \cite[Corollary 5.24]{KollarMori}, any variety with rational and Gorenstein singularities have canonical singularities. But, spherical varieties are rational, then the second assertion of Proposition~\ref{prop:implication} is already known.
\end{rem}

\begin{proof}

Fix a spherical homogeneous space $G/H$.

Let $X$ be a locally factorial $G/H$-embedding and let $f:\,V\longrightarrow X$ be a $G$-equivariant resolution of $X$. Let $E_i$ be an exceptional divisor of $f$. And let $(\mathcal{C},\mathcal{F})\in\mathbb{F}_X$ such that $\sigma(E)\in\mathcal{C}$.
By Proposition~\ref{prop:factoriality}, the elements of $\mathcal{F}$ have distinct images in $N_\Qbb$ and there exists a basis $(u_1,\dots,u_k)\cup(\sigma(D))_{D\in\mathcal{F}}$ of the lattice $N$ such that $\mathcal{C}$ is generated by the family $(u_1,\dots,u_{k'})\cup(\sigma(D))_{D\in\mathcal{F}}$ (where $k'\leq k$ are non-negative integers). 
In particular, $\sigma(E_i)=\sum_{j=1}^{k'}\lambda_iu_i+\sum_{D\in\mathcal{F}}\lambda_D\sigma(D)$, where the $\lambda_i$'s and the $\lambda_D$'s are non-negative integers. Moreover, either at least two of these integers are not zero, or only one $\lambda_D$ is not zero, because $\sigma(E_i)$ is a primitive element different from the $u_i$'s. In the second case, we must have $\sigma(D)$ in $\mathcal{V}$ so that, by Corollary~\ref{cor:canonicaldiv}, $a_D\geq 2$.

Hence, $h_{\mathcal{C}}(\sigma(E_i))=\sum_{j=1}^{k'}\lambda_i+\sum_{D\in\mathcal{F}}\lambda_Da_D$ is at least two. We conclude by Proposition~\ref{prop:canterm}.\\
 
The proof of the second assertion is easier.

Let $X$ be a Gorenstein $G/H$-embedding and let $f:\,V\longrightarrow X$ be a $G$-equivariant resolution of $X$. Let $E_i$ be an exceptional divisor of $f$. And let $(\mathcal{C},\mathcal{F})\in\mathbb{F}_X$ such that $\sigma(E_i)\in\mathcal{C}$ so that $h_{\mathcal{C}}(\sigma(E_i))$ is positive. Then, since $X$ is Gorenstein, $h_{\mathcal{C}}(\sigma(E_i))$ is a positive integer.
Hence, $a_i=h_{\mathcal{C}}(\sigma(E_i))-1$ is a non-negative integer. It implies that $X$ has canonical singularities.
\end{proof}

We can now conclude the paper by the following diagram and example.\\
\begin{figure}
\begin{center}
\caption{Relations between the singularities considered in this paper}\label{fig:relations}
\vspace{0.4cm}
\begin{tikzpicture}[scale=1]
\node [text width=3cm, text centered] (a) at (0,0) {Smooth};
\node [text width=3cm, text centered] (b) at (-3,-2) {Locally factorial};
\node [text width=3cm, text centered] (c) at (-5,-4) {$\Qbb$-factorial};
\node [text width=2cm, text centered] (d) at (-1.5,-4) {Gorenstein};
\node [text width=3cm, text centered] (e) at (-3,-6) {$\Qbb$-Gorenstein};
\node [text width=2.5cm, text centered] (f) at (4,-2) {terminal singularities};
\node [text width=2.5cm, text centered] (g) at (6,-4) {canonical singularities};
\node [text width=3cm, text centered] (h) at (4,-6) {log terminal singularities};

\draw [double distance=2pt,->] (a) -- (b);
\draw [double distance=2pt,->] (a) -- (f);
\draw [double distance=2pt,->] (b) -- (c);
\draw [double distance=2pt,->] (b) -- (d);
\draw [double distance=2pt,->] (f) -- (g);
\draw [double distance=2pt,->] (c) -- (e);
\draw [double distance=2pt,->] (d) -- (e);
\draw [double distance=2pt,->] (g) -- (h);
\draw [color=red,double distance=2pt,->] (d) -- node[above] {if with rational singularities} node[below] {in particular if spherical}(g);
\draw [color=red,double distance=2pt,->] (b) -- node[above] {if spherical} (f);
\draw [color=red,double distance=2pt,->] (e) -- node[above] {if spherical}(h);

\draw [double distance=2pt,<-] (e) to [bend right=30] (h);
\end{tikzpicture}
\end{center}
\end{figure}
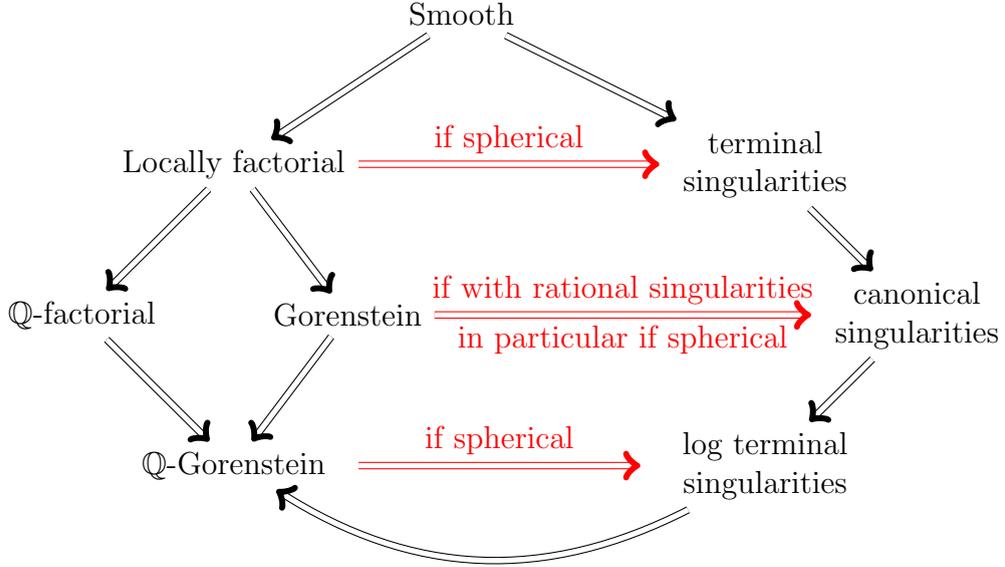

\begin{ex}\label{ex:singSL3U1}

Consider the horospherical homogeneous space ${\rm SL}_3/U$ where $U$ is the subgroup of upper triangular matrices with ones on the diagonal. In that case, with the notations above, $M$ is the set of characters of a maximal torus of ${\rm SL}_3$. In particular,  $M$ and $N\simeq \Zbb^2$. The valuation cone is $N_\Qbb$, $\mathcal{D}$ has two elements $D_\alpha$ and $D_\beta$ whose image by $\sigma$ are respectively the simple coroots $\alpha^\vee$ and $\beta^\vee$.

We only consider here complete colored fans. Note also that, here, the images of the colors are all distinct. Then, to represent a colored fan, we only draw the edges of the fan and we represent a color of the fan by bordering in grey the white circle corresponding to the image of the color of $G/H$.

Then we give in Figure~\ref{fig:singSL3U} a list a colored fans (corresponding to projective ${\rm SL}_3/U$-embed\-dings), by pointing those who are smooth, locally factorial, $\Qbb$-factorial or not $\Qbb$-factorial, Gorenstein, $\Qbb$-Gorenstein or not $\Qbb$-Gorenstein, with terminal or canonical singularities, or only with log terminal singularities. When the variety $X$ is not $\Qbb$-Gorenstein, we can also precise if there exists, or not, a $\Qbb$-divisor $D$ such that the pair $(X,D)$ is klt.

We only write the optimal singularities.

We also represent by arrows all $G$-equivariant morphisms between these $G/H$-embeddings.

In this example, we see in particular that there exist horospherical varieties with terminal singularities that are either not Gorenstein or not $\Qbb$-factorial. It means that we list all possible implications in Figure~\ref{fig:relations}.

\begin{figure}

\caption{Singularities of the ${\rm SL}_3/U$-embeddings of Example~\ref{ex:singSL3U1}}\label{fig:singSL3U}
\vspace{0.2cm}
\begin{tikzpicture}[scale=1]
\node [text width=3cm, text centered] (a) at (0,-1.5) {\scalebox{0.35}{\input{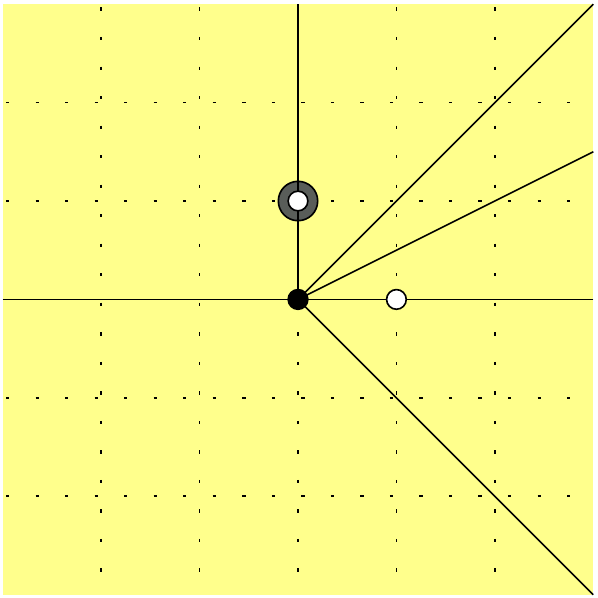_t}}
\small{Smooth}
};

\node [text width=3cm, text centered] (b) at (-6,-5.5) {\scalebox{0.35}{\input{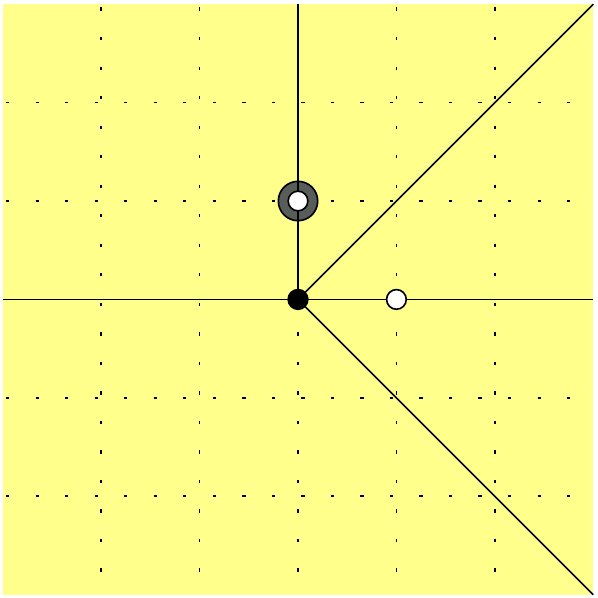_t}}
\small{Smooth}
};
\node [text width=3.2cm, text centered] (c) at (-2,-5.5) {\scalebox{0.35}{\input{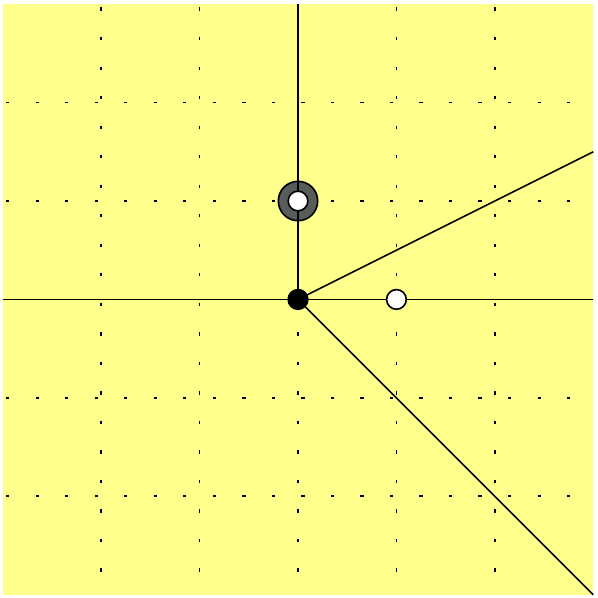_t}}
\small{$\Qbb$-factorial, not Gorenstein, terminal singularities}
};
\node [text width=3cm, text centered] (d) at (2,-5.5) {\scalebox{0.35}{\input{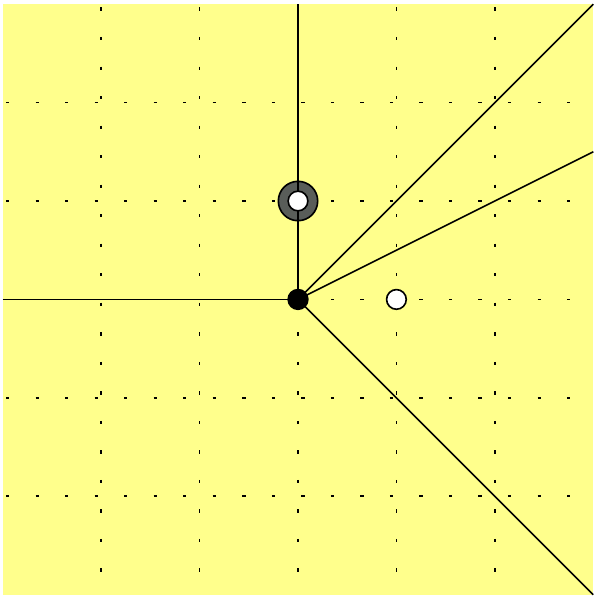_t}}
\small{$\Qbb$-factorial, not Gorenstein (and log terminal singularities)}
};
\node [text width=3.2cm, text centered] (e) at (6,-5.5) {\scalebox{0.35}{\input{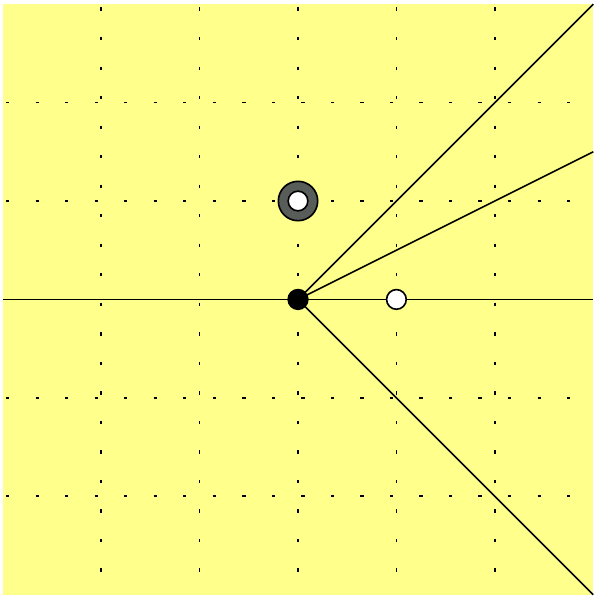_t}}
\small{Not $\Qbb$-factorial, Gorenstein, terminal singularities}
};

\node [text width=3cm, text centered] (f) at (-6,-10) {\scalebox{0.35}{\input{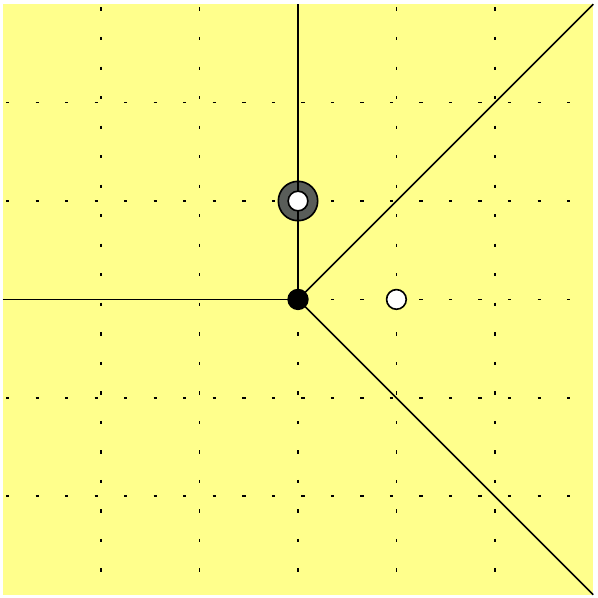_t}}
\small{$\Qbb$-factorial, Gorenstein (and canonical singularities)}
};

\node [text width=3cm, text centered] (g) at (-2.5,-10) {\scalebox{0.35}{\input{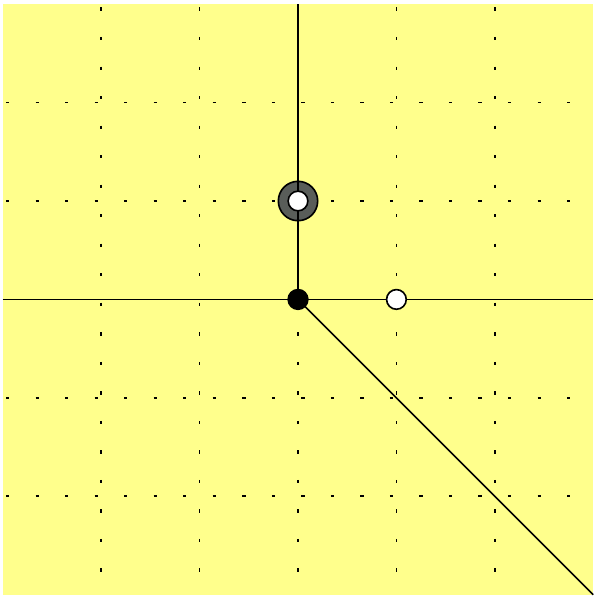_t}}
\small{Smooth}
};

\node [text width=4cm, text centered] (h) at (2,-10) {~~~~\scalebox{0.35}{\input{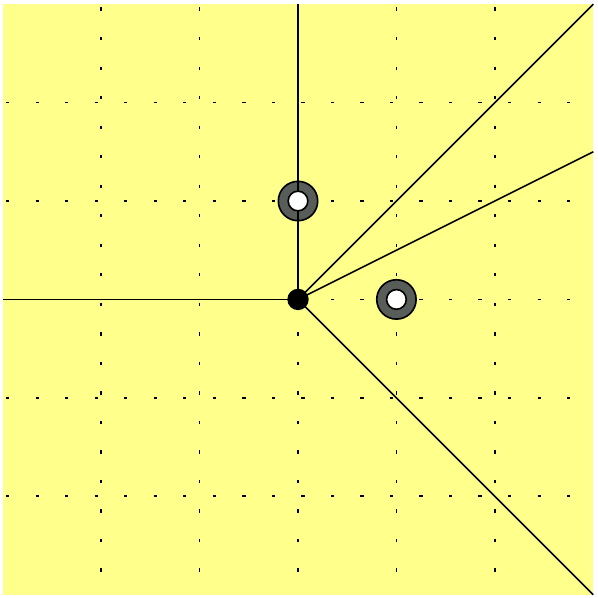_t}}
\small{~~~~~Not $\Qbb$-Gorenstein,
there exists no klt pair}
};
\node [text width=3cm, text centered] (i) at (5.5,-10) {\scalebox{0.35}{\input{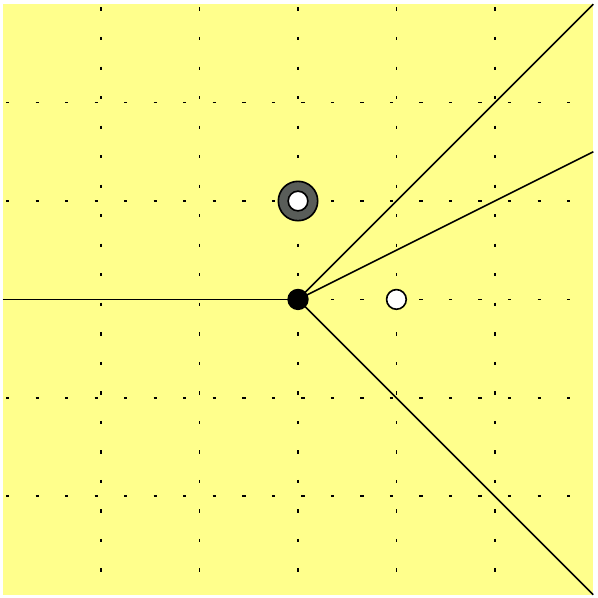_t}}
\small{ $\Qbb$-Gorenstein (and log terminal singularities)}
};

\node [text width=3cm, text centered] (j) at (-2.5,-14) {\scalebox{0.35}{\input{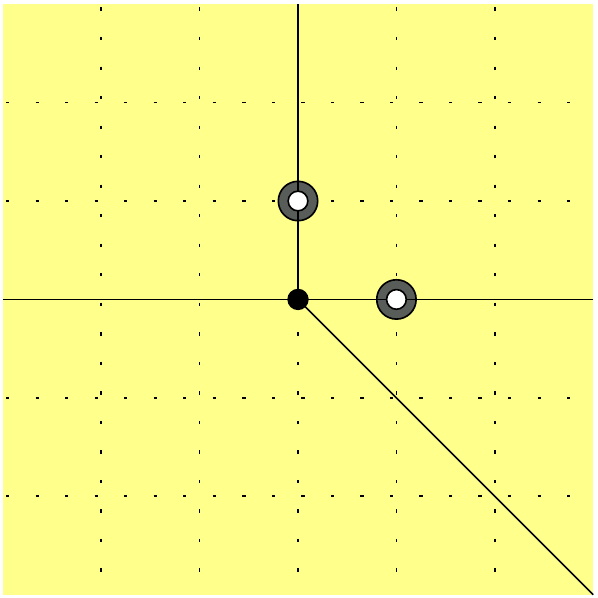_t}}
\small{Locally factorial (and terminal singularities)}
};

\node [text width=3cm, text centered] (k) at (-6,-14.5) {\scalebox{0.35}{\input{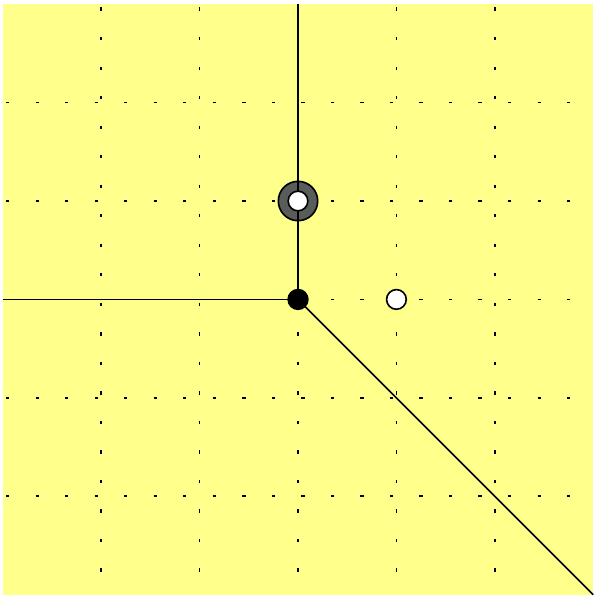_t}}
\small{Smooth}
};

\node [text width=3.2cm, text centered](l) at (2.5,-14.2) {\scalebox{0.35}{\input{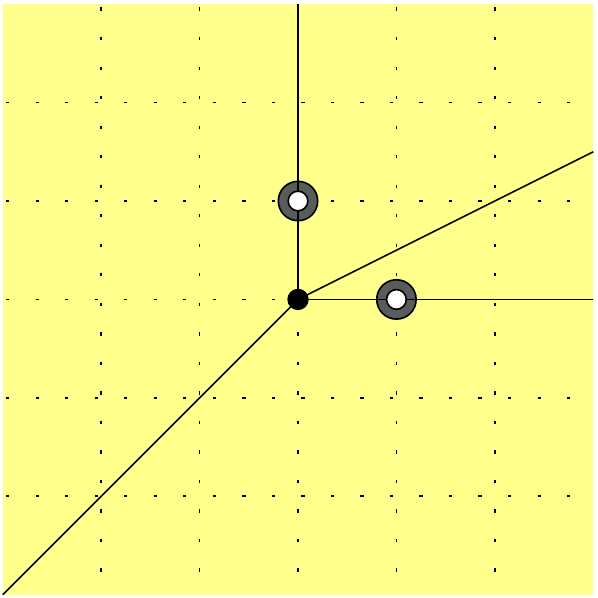_t}}
\small{$\Qbb$-factorial, not Gorenstein, terminal singularities}
};

\node [text width=3cm, text centered](o) at (6,-14.5) {\scalebox{0.35}{\input{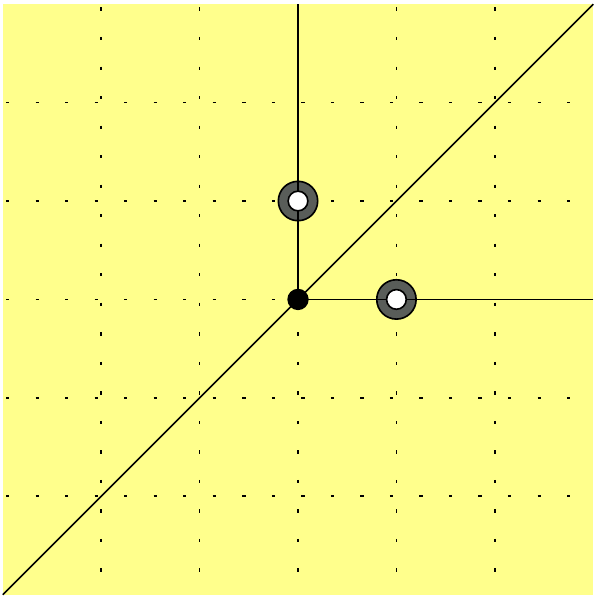_t}}
\small{Smooth}
};

\node [text width=4cm, text centered](m) at (-4,-18.5) {~~~\scalebox{0.35}{\input{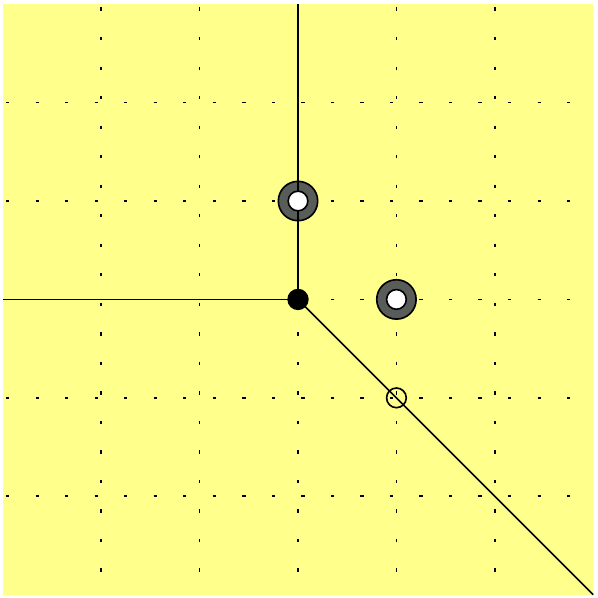_t}}
\small{~~~~Not $\Qbb$-Gorenstein, there exists klt pairs (for example with $D=\frac{1}{2}X_5+\frac{1}{2}D_\beta$)}
};

\node [text width=3cm, text centered](n) at (2.5,-18.6) {\scalebox{0.35}{\input{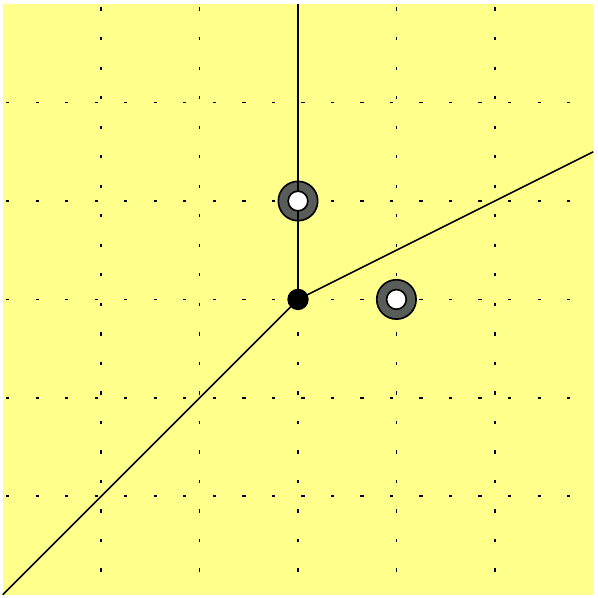_t}}
\small{$\Qbb$-Gorenstein, terminal singularities}
};

\node [text width=3cm, text centered](p) at (6,-18.5) {\scalebox{0.35}{\input{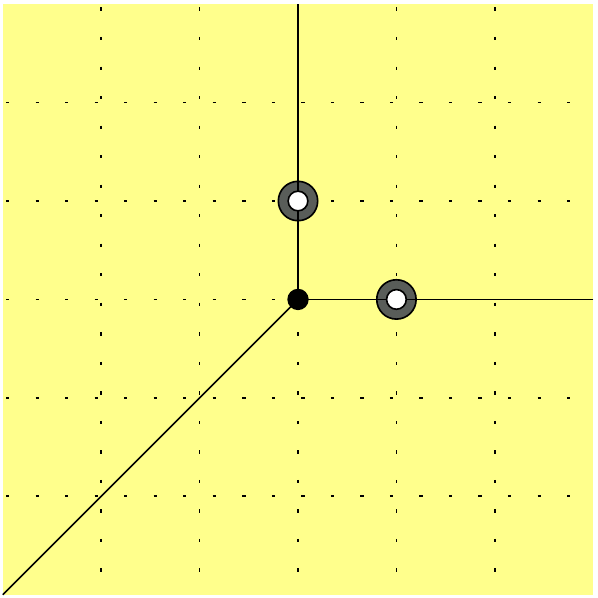_t}}
\small{Locally factorial (and terminal singularities)}
};

\draw[->,>=latex] (a) to[out=-165,in=75] (b);
\draw[->,>=latex] (a) -- (c);
\draw[->,>=latex] (a) -- (d);
\draw[->,>=latex] (a) to[out=-15,in=105] (e);
\draw[->,>=latex] (b) -- (f);
\draw[->,>=latex] (b) -- (g);
\draw[->,>=latex] (c) --   (g);
\draw[->,>=latex] (d) --   (h);
\draw[->,>=latex] (d) to[out=-120,in=40]  (f);
\draw[->,>=latex] (d) --   (i);

\draw[->,>=latex] (e) --  (i);
\draw[->,>=latex] (f) --  (k);
\draw[->,>=latex] (g) --   (k);
\draw[->,>=latex] (g) --   (j);
\draw[->,>=latex] (j) -- (m);
\draw[->,>=latex] (k) -- (m);
\draw[->,>=latex] (l) -- (n);
\draw[->,>=latex] (l) -- (p);
\draw[->,>=latex] (h) to[out=-130,in=20] (m);
\draw[->,>=latex] (o) -- (p);

\end{tikzpicture}

\end{figure}
\end{ex}

\newpage
\bibliographystyle{amsalpha}
\bibliography{SingSpher}
\bigskip\noindent

\end{document}